\newcommand{\nc}{\newcommand}
\nc{\dmo}{\DeclareMathOperator}
\nc{\nt}{\newtheorem}
\newtheorem{theorem}{Theorem}[section]
\newtheorem*{mainthm}{Main Theorem}
\newtheorem{lemma}[theorem]{Lemma}
\newtheorem{proposition}[theorem]{Proposition}
\newtheorem{corollary}[theorem]{Corollary}
\theoremstyle{definition}
\newtheorem{question}[theorem]{Question}
\newtheorem{remark}[theorem]{Remark}
\theoremstyle{remark}
\nc{\cut}{\!\ssearrow\!}
\dmo{\Diff}{Diff}
\dmo{\Mod}{Mod}
\dmo{\SMod}{SMod}
\dmo{\I}{\mathcal{I}}
\dmo{\SO}{SO}
\dmo{\Orth}{O}
\dmo{\Sp}{Sp}
\dmo{\SL}{SL}
\dmo{\GL}{GL}
\dmo{\im}{im}
\dmo{\Emb}{Emb}
\dmo{\PSp}{PSp}
\dmo{\PSL}{PSL}
\dmo{\Homeo}{Homeo}
\dmo{\Twist}{Twist}
\dmo{\Aut}{Aut}
\dmo{\Nil}{Nil}
\dmo{\Sol}{Sol}
\dmo{\Isom}{Isom}
\dmo{\Out}{Out}
\dmo{\orb}{orb}
\nc{\Z}{\mathbb Z}
\nc{\N}{\mathcal N}
\nc{\R}{\mathbb R}
\nc{\F}{\mathcal F}
\nc{\C}{\mathbb{C}}
\nc{\ga}{\gamma}
\nc{\de}{\delta}
\nc{\ep}{\epsilon}
\nc{\flm}{\lambda_{2}}
\nc{\normalclosure}[1]{\ensuremath{\left \langle \left \langle #1 \right \rangle \right \rangle}}
\nc{\margin}[1]{\marginpar{\scriptsize #1}}
\nc{\p}[1]{\bigskip\noindent\textbf{#1.}}
\nc{\lei}[1]{{\color{red} \sf  L: [#1]}}
\nc{\bena}[1]{{\color{blue} \sf  B: [#1]}}
\title{Nielsen Realization for sphere twists on 3-manifolds}
\author{Lei Chen}
\author{Bena Tshishiku}
\address{Lei Chen \\ Department of Mathematics\\ University of Maryland \\ 4176 Campus Drive\\ College Park, MD 20742 \\  chenlei@umd.edu}
\address{Bena Tshishiku \\ Department of Mathematics\\ Brown University \\ 151 Thayer St.  \\ Providence, RI, 02912\\  bena$\_$tshishiku@brown.edu.   }
\begin{document}

\maketitle

\vspace*{-4ex}

\begin{abstract}
For a 3-manifold $M$, the twist group $\Twist(M)$ is the subgroup of the mapping class group $\Mod(M)$ generated by twists about embedded $2$-spheres. We study the Nielsen realization problem for subgroups of $\Twist(M)$. We prove that a nontrivial subgroup $G<\Twist(M)$ is realized by diffeomorphisms if and only if $G$ is cyclic and $M$ is a connected sum of lens spaces, including $S^1\times S^2$. We also apply our methods to the Burnside problem for 3-manifolds and show that $\Diff(M)$ does not contain an infinite torsion group when $M$ is reducible and not a connected sum of lens spaces.
\end{abstract}

%\setcounter{tocdepth}{1}
%\tableofcontents

%\vspace*{0in}

\section{Introduction}

The mapping class group $\Mod(M)$ of a smooth, closed oriented manifold $M$, is the group of isotopy classes of orientation-preserving diffeomorphisms of $M$. Denoting $\Diff^+(M)$ the group of orientation-preserving diffeomorphisms, there is a natural projection map \[\pi:\Diff^+(M)\to\Mod(M)\]
sending a diffeomorphism to its isotopy class. We say that a subgroup $i: G\hookrightarrow \Mod(M)$ is \emph{realizable} if there is a homomorphism of $\rho: G\to \Diff^+(M)$ such that $\pi \circ \rho = i$.
\[\begin{xy}
(-30,0)*+{G}="A";
(0,15)*+{\Diff^+(M)}="B";
(0,0)*+{\Mod(M)}="C";
{\ar@{-->}"A";"B"}?*!/_3mm/{\rho};
{\ar "B";"C"}?*!/_3mm/{\pi};
{\ar "A";"C"}?*!/^3mm/{i};
\end{xy}\]
The Nielsen realization problem asks which finite groups $G<\Mod(M)$ are realizable.  When $M$ is a surface, every finite subgroup of $\Mod(M)$ is realizable by work of Kerkchoff \cite{kerckhoff}. For other manifolds, only sporadic results have been obtained; see  \cite[\S2]{pardon} for a summary of results in dimension 3, \cite{farb-looijenga,Konno2, lee, Konno} for results in dimension 4, and \cite{farrell-jones,block-weinberger,bustamante-tshishiku} for higher dimensions. 

When $M$ is a 3-manifold, the \emph{twist group} $\Twist(M)<\Mod(M)$ is the subgroup generated \emph{sphere twists} (defined in \S\ref{sec:sphere-twists}). McCullough \cite{mccullough} proved that $\Twist(M)\cong(\Z/2\Z)^d$ for some $d\ge0$. We address the Nielsen realization problem for subgroups of $\Twist(M)$. This problem was studied by Zimmermann \cite{zimmermann} for $M=\#_k(S^1\times S^2)$, but there is an error in his argument; see \cite{zimmermann-erratum}. Nevertheless, using some of the same ideas, we correct the argument, and we generalize from $\#_k(S^1\times S^2)$ to all $3$-manifolds, giving a precise condition for which subgroups can be realized or not.

To state the main result, recall that for any pair of coprime integers $p,q$, there is a lens space $L(p,q)$. Every lens space is covered by $S^3$ with the exception of $L(0,1)\cong S^1\times S^2$.

\begin{mainthm}
Fix a closed, oriented $3$-manifold $M$, and fix a nontrivial subgroup $1\neq G<\Twist(M)$. Then $G$ is realizable if and only if $G$ is cyclic and $M$ is diffeomorphic to a connected sum of lens spaces. 
\end{mainthm}

\p{Application to the Burnside problem for diffeomorphism groups} 
Recall that a \emph{torsion group} is a group where every element has finite order. The existence of finitely-generated, infinite torsion groups, known as \emph{Burnside groups}, was proved by Golod--Shafarevich \cite{Golod} \cite{GS} and Adian-Novikov \cite{AN}. %There is a vast literature on this subject by Olshanskii, Ivanov, Grigorchuk, among others. 
E. Ghys and B. Farb asked if the homeomorphism group of a compact manifold can contain a Burnside group; see \cite[Question 13.2]{Fisher2} and \cite[\S5]{Fisher1}. 
%\begin{question}[Burnside problem for homeomorphism groups]
%Let $M$ be a connected compact manifold. Does $\Homeo(M)$ has infinite torsion group?
%\end{question}
As an application of the tools used to prove the Main Theorem, we prove the following result on Burnside problem for $\Diff(M)$.
\begin{theorem}\label{Burnside}
Let $M$ be a compact oriented $3$-manifold. Assume that $M$ is reducible and not a connected sum of lens spaces. Then $\Diff(M)$ does not contain a Burnside group.
\end{theorem}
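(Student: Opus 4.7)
The plan is to analyze any finitely generated torsion subgroup $G < \Diff(M)$ via the short exact sequence
\[ 1 \to G\cap \Diff_0(M) \to G \to \pi(G) \to 1, \]
and show that both the kernel and the quotient are finite, forcing $G$ itself to be finite.

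For the kernel, I would appeal to Hatcher's solution of the Smale conjecture together with its generalizations (Gabai and others for irreducible pieces, Hatcher--McCullough for the reducible case), which imply that $\Diff_0(M)$ has the homotopy type of a finite-dimensional compact Lie group. Any finitely generated torsion subgroup of a compact Lie group is finite --- by Jordan's theorem, every finite subgroup contains an abelian subgroup of index bounded by a constant depending only on the dimension, and a finitely generated torsion abelian group is finite --- so $G\cap \Diff_0(M)$ is finite.

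For the quotient, the key is to establish that $\Mod(M)/\Twist(M)$ is virtually torsion-free; combined with the finiteness of $\Twist(M)\cong (\Z/2\Z)^d$, this implies that every finitely generated torsion subgroup of $\Mod(M)$ is finite, and in particular so is $\pi(G)$. To verify virtual torsion-freeness I would use the structure theorems for mapping class groups of reducible 3-manifolds (Laudenbach's computation $\Mod(\#_k(S^1\times S^2))/\Twist \cong \Out(F_k)$, generalized by McCullough to an arbitrary reducible $M$): the quotient $\Mod(M)/\Twist(M)$ sits in an extension whose building blocks are the mapping class groups of the punctured prime summands together with a group of $\Out(F_k)$-type controlling the recombination of those summands. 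Each building block is virtually torsion-free --- Mostow rigidity for hyperbolic summands, Seifert fiber theory for non-hyperbolic non-spherical summands, finiteness for spherical summands, and the action on Culler--Vogtmann Outer space for the $\Out(F_k)$-type factor --- so the assembly is virtually torsion-free as well.

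The main obstacle will be this last step, namely controlling $\Mod(M)/\Twist(M)$ in the full reducible case; this is the technical input shared with the Main Theorem. The hypothesis that $M$ is not a connected sum of lens spaces enters here: the Main Theorem tells us that, under this hypothesis, no nontrivial subgroup of $\Twist(M)$ lifts to a finite subgroup of $\Diff(M)$, which is the ingredient that prevents sphere twists from supplying the torsion needed to build a Burnside group inside $\Diff(M)$. Once both the kernel and quotient of the displayed extension are known to be finite, $G$ itself is finite, so $\Diff(M)$ contains no Burnside group.
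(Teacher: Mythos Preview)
Your argument has a genuine gap in the kernel step, and relatedly you never actually use the hypothesis that $M$ is not a connected sum of lens spaces.

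For the kernel: you assert that $\Diff_0(M)$ has the homotopy type of a compact Lie group and then invoke Jordan's theorem. But Jordan's theorem is a statement about finite-dimensional Lie groups; it says nothing about an infinite-dimensional Fr\'echet group that merely has the homotopy type of a compact Lie group. Homotopy type does not control finite subgroup structure. (Also, a clean statement identifying the homotopy type of $\Diff_0(M)$ for an arbitrary reducible $M$ is more delicate than you suggest.) So you have no mechanism for bounding torsion in $G\cap\Diff_0(M)$.

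For the role of the hypothesis: as written, neither your kernel step nor your quotient step uses ``not a connected sum of lens spaces''. Your final paragraph tries to insert the Main Theorem, but the conclusion ``no nontrivial subgroup of $\Twist(M)$ lifts to $\Diff(M)$'' does not feed into either half of your short exact sequence; it floats free of the argument. If your outline worked, it would prove the theorem with no hypothesis on $M$ at all.

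The paper instead filters by the action on $\pi_1$: set $K=\ker\bigl[\Diff(M)\xrightarrow{\Phi}\Out(\pi_1(M))\bigr]$, which contains $\Diff_0(M)$. The key point (Lemma~\ref{Burnside_lemma}) is that $K$ is \emph{torsion free} under the hypothesis, proved with exactly the tools from the Main Theorem: a finite-order $f\in K$ acts trivially on $\pi_1$, so by the equivariant sphere theorem and Proposition~\ref{prop:fundamental-group} it acts trivially on $\pi_1$ of each irreducible piece, whence each piece is a lens space by Theorem~\ref{thm:holder-argument}. This is precisely where ``not a connected sum of lens spaces'' enters. For the quotient, a single citation to Hong--McCullough gives that $\Out(\pi_1(M))$ is virtually torsion free, so $\Phi(G)$ is finite; since $K$ is torsion free, $G\cong\Phi(G)$. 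No homotopy-type information about $\Diff_0(M)$ is needed, and the case analysis you sketch for the quotient is avoided.
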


\p{Smooth vs.\ topological Nielsen realization} The topological mapping class group $\Mod_H(M)$ is defined as the group of isotopy classes of orientation-preserving homeomorphisms of $M$. There is a natural projection map 
\[\Homeo^+(M)\to \Mod_H(M),\]
and the Nielsen realization problem can also be asked for subgroups of $\Mod_H(M)$. For 3-manifolds, the smooth and topological mapping class groups coincide  $\Mod(M)\cong\Mod_H(M)$ by Cerf \cite{cerf}, who proved that $\Diff^+(M)$ and $\Homeo^+(M)$ are homotopy equivalent. Surprisingly, the realization problem for finite groups is also the same in the topological and smooth categories in dimension 3. For dimension $4$, even the connected components are different by work of Ruberman \cite{Ruberman}. The work of Baraglia--Konno \cite{Konno2} gives a mapping class that can be realized as homeomorphism but not as diffeomorphism and Konno \cite{Konno} generalizes the results to more manifolds.

\begin{theorem}[Pardon, Kirby--Edwards]\label{thm:pardon}
Let $M$ be a closed oriented 3-manifold. A finite subgroup $G<\Mod(M)$ is realizable by homeomorphisms if and only if it is realizable by diffeomorphisms.
\end{theorem}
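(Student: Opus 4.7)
The forward implication is immediate from the inclusion $\Diff^+(M)\subset\Homeo^+(M)$. For the converse, the plan is to invoke an equivariant smoothing theorem of Pardon: every continuous action of a finite group on a closed smooth $3$-manifold is conjugate, via a homeomorphism of $M$, to a smooth action. Taking this as a black box, the remainder of the argument is formal bookkeeping to ensure that the conjugating homeomorphism does not alter which subgroup of $\Mod(M)$ is realized.

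In detail, suppose $\rho\colon G\to\Homeo^+(M)$ is a topological realization of $G\hookrightarrow\Mod(M)\cong\Mod_H(M)$, where the last identification is Cerf's. Applying Pardon's theorem to the $G$-action $\rho$ produces $h\in\Homeo^+(M)$ such that $\tilde\rho(g):=h\rho(g)h^{-1}$ lies in $\Diff^+(M)$ for every $g\in G$. The resulting map $\tilde\rho\colon G\to\Diff^+(M)$ is a smooth realization, but a priori only of the conjugate subgroup $[h]\,G\,[h]^{-1}<\Mod(M)$ rather than of $G$ itself.

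To land back at $G$, I would use the other half of Cerf's theorem: since $\Diff^+(M)\hookrightarrow\Homeo^+(M)$ is a weak equivalence, $h$ is isotopic through homeomorphisms to a diffeomorphism $h'\in\Diff^+(M)$. Set $\rho'(g):=(h')^{-1}\tilde\rho(g)h'$; this is still a smooth homomorphism $G\to\Diff^+(M)$, and because $[h']=[h]$ in $\Mod_H(M)=\Mod(M)$, its image in $\Mod(M)$ is exactly $G$.

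The genuine content of the theorem thus lies entirely in Pardon's equivariant smoothing, whose proof draws on geometrization together with equivariant PL/smooth-structure theory in dimension $3$ (where Kirby--Edwards-type results enter on the PL side). Once that input is granted, the rest of the argument is purely formal manipulation at the level of isotopy classes.
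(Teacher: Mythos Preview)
Your argument is correct, but it differs from the paper's in how Pardon's result and the Kirby--Edwards input are used. The paper invokes Pardon in its \emph{approximation} form: the topological action $\rho$ can be uniformly approximated by a smooth action $\rho'$. Then Kirby--Edwards is used directly, for the fact that $\Homeo^+(M)$ is locally path-connected, so that $\rho'(g)$ (being $C^0$-close to $\rho(g)$) is isotopic to $\rho(g)$ in $\Homeo^+(M)$; Cerf's identification $\Mod_H(M)=\Mod(M)$ then finishes. By contrast, you use Pardon in its \emph{conjugation} form, which shifts the realized subgroup to a conjugate $[h]G[h]^{-1}$, and then you undo the shift by isotoping $h$ to a diffeomorphism via Cerf's weak equivalence on $\pi_0$. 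Both routes are short and valid; the paper's avoids tracking a conjugating homeomorphism, while yours avoids invoking local path-connectedness explicitly.

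One minor point: your closing remark that ``Kirby--Edwards-type results enter on the PL side'' inside Pardon's proof misidentifies their role in the theorem's attribution. In the paper's argument Kirby--Edwards is cited not as an ingredient in Pardon's smoothing but for the local path-connectedness of $\Homeo^+(M)$, which is precisely what converts uniform approximation into isotopy. Also, a small terminological slip: the direction you call ``forward'' (realizable by diffeomorphisms $\Rightarrow$ realizable by homeomorphisms) is the trivial one; the substantive direction is the other.
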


In particular, this allows us to strengthen the conclusion of the Main Theorem. We emphasize that the group $G$ in Theorem \ref{thm:pardon} is finite; however, we do not know an example of a 3-manifold $M$ and an infinite group $G<\Mod(M)$ that is realizable by homeomorphisms but not diffeomorphisms.

\begin{proof}[Proof of Theorem \ref{thm:pardon}]
Let $\rho:G\to\Homeo^+(M)$ be a realization of $G<\Mod(M)$. By Pardon \cite{pardon}, $\rho$ can be approximated uniformly by a smooth action $\rho': G\to\Diff^+(M)$. Since $\Homeo^+(M)$ is locally path-connected by Kirby--Edwards \cite{kirby-edwards}, we know that $\rho'(g)$ and $\rho(g)$ are isotopic in $\Homeo^+(M)$ for each $g\in G$, and hence also isotopic in $\Diff^+(M)$ since $\Mod_H(M)=\Mod(M)$. 
\end{proof}

\p{Related work} The following remarks connect the Main Theorem to other previous work. 

\begin{remark}[Twist group for $S^1\times S^2$] 
The group $\Twist(S^1\times S^2)$ is isomorphic to $\Z/2\Z$. We construct a realization of this group in \S\ref{sec:construction}. This example seems to be overlooked in some of the literature on finite group actions on geometric 3-manifolds. It is a folklore conjecture of Thurston that any finite group action on a geometric 3-manifold is geometric (i.e.\ acts isometrically on some geometric structure). It is easy to see that our realization of the twist group, which also appears in work of Tollefson \cite{tollefson_involutions}, does not preserve any geometric structure on $S^1\times S^2$, so it is a simple counterexample to Thurston's conjecture.

According to Meeks--Scott \cite{meeks-scott}, Thurston proved some cases of his conjecture, but these results were not published. Meeks--Scott \cite{meeks-scott} proved Thurston's conjecture for manifolds modeled on $\mathbb H^2\times\R$, $\widetilde{\SL_2(\R)}$, $\Nil$, $\mathbb E^3$, and $\Sol$. In \cite[Thm.\ 8.4]{meeks-scott} it is asserted (incorrectly) that Thurston's conjecture also holds for 3-manifolds modeled on $S^2\times\R$ (in particular $S^2\times S^1$); they give an argument, but in the case when some $g\in G$ has positive-dimensional fixed set (as is the case for our realization of $\Twist(S^1\times S^2)$), they cite a preprint of Thurston that seems to have never appeared.
\end{remark}

\begin{remark}[Sphere twists in dimension 4]
For a 4-manifold $W$, for each embedded 2-sphere $S\subset W$ with self-intersection $S\cdot S=-2$, there is a sphere twists $\tau_S\in\Mod(W)$, which has order 2. There are several results known about realizing the subgroup generated by a sphere twist, both positive and negative; see Farb--Looijenga \cite[Cor.\ 1.10]{farb-looijenga}, Konno \cite[Thm.\ 1.1]{Konno}, and Lee \cite[Rmk.\ 1.7]{lee}. It would be interesting to determine precisely when a sphere twist is realizable in dimension 4. \end{remark}

\p{About the proof of the Main Theorem} The proof is divided into two parts: construction and obstruction (corresponding to the ``if" and ``only if" directions in the theorem statement). For the obstruction part of the argument, we prove the following constraint on group actions on reducible 3-manifolds. 

\begin{theorem}\label{thm:contraint}
Let $M$ be a closed, oriented, reducible $3$-manifold. Let $G<\Diff^+(M)$ be a finite subgroup that acts trivially on $\pi_1(M)$. Then $G$ is cyclic. If $G$ is nontrivial, then $M$ is a connected sum of lens spaces. % every prime component of $M$ is a lens space. 
\end{theorem}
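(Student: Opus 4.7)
The plan is to reduce to a prime‐by‐prime analysis via the equivariant prime decomposition, and then to invoke rigidity of finite group actions on prime $3$‐manifolds. By the equivariant sphere theorem of Meeks--Simon--Yau, there is a $G$‐invariant system $\Sigma\subset M$ of pairwise disjoint essential $2$‐spheres realizing the Kneser--Milnor decomposition $M=M_1\#\cdots\#M_k$; let $P_i\subset M$ denote the punctured piece corresponding to $M_i$. Because $G$ acts by inner automorphisms on $\pi_1(M)\cong\pi_1(M_1)*\cdots*\pi_1(M_k)$, and inner automorphisms of a free product preserve each free factor up to conjugacy, $G$ must preserve each piece $P_i$ corresponding to a non‐$S^3$ summand. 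For each such $P_i$, equivariantly cap off the boundary spheres by coning the induced $G$‐action on $S^2$ to a $3$‐ball; this produces a $G$‐action on the closed summand $M_i$ that is still trivial on $\pi_1(M_i)$ and has a fixed point at each cone point.

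\textbf{Ruling out non‐lens‐space summands.} The central claim is that if some $M_i$ is neither $S^3$ nor a lens space, then the $G$‐action on $M_i$ is trivial; then the action on the open piece $P_i\subset M$ is trivial, contradicting Newman's theorem (a nontrivial effective finite group action on a connected manifold does not fix an open set) unless $G=1$. The argument splits according to geometrization. If $M_i$ is aspherical (equivalently $|\pi_1(M_i)|=\infty$ by the sphere theorem), then in the hyperbolic case $\pi_1(M_i)$ is centerless, so by Mostow rigidity combined with Waldhausen's theorem every trivial‐on‐$\pi_1$ self‐homeomorphism of $M_i$ is isotopic to the identity, and the existence of a fixed point upgrades this to triviality of the action. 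In the Seifert‐fibered case, a trivial‐on‐$\pi_1$ action preserves the Seifert fibration and rotates fibers, and a fixed point forces this rotation to be trivial. If $M_i=S^3/\Gamma$ is spherical with $\Gamma$ non‐cyclic, lift the action to the universal cover to obtain $\widetilde G\subset\Diff^+(S^3)$ fitting in an extension $1\to\Gamma\to\widetilde G\to G\to1$ in which $G$ acts on $\Gamma$ by inner automorphisms; by geometrization $\widetilde G$ is conjugate to a subgroup of $\SO(4)$, and the Hopf classification of non‐cyclic free subgroups of $\SO(4)$ together with a centralizer analysis (and the fixed‐point data lifted to $S^3$) forces $G=1$.

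\textbf{Cyclicity.} With every non‐$S^3$ summand now a lens space, so that $M$ is a connected sum of lens spaces, it remains to show $G$ is cyclic. For each summand $L(p,q)$ with $p\ge3$, the induced $G$‐action is trivial on $\pi_1(L(p,q))=\Z_p$ and has a fixed point; the lifted action yields $\widetilde G\cong\Z_p\times G\subset\SO(4)$, the direct product arising from centrality of $\Z_p$ together with the splitting produced by the fixed point. Since the centralizer of $\Z_p$ in $\SO(4)$ is the maximal torus $T^2$ and its intersection with the stabilizer of a lifted fixed point is a circle $\SO(2)$, the group $G$ is cyclic. For $p=2$ (i.e.~$L(2,1)=\mathbb{RP}^3$), any non‐cyclic finite subgroup of $\SO(3)$ acting on $\mathbb{RP}^3$ has at most one fixed point, yet equivariant capping produces at least two (one inside $P_i$ and the added cone point), so $G$ is cyclic. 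For $p=0$ (i.e.~$S^1\times S^2$), a direct analysis using the product structure $\SO(2)\times\SO(3)\subset\Diff^+(S^1\times S^2)$ and the trivial‐on‐$\pi_1$ condition confines $G$ to $\{1\}\times\SO(2)$, again cyclic.

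\textbf{Main obstacle.} The most delicate step is the spherical non‐cyclic case of the rigidity argument. It requires the full strength of geometrization to conjugate $\widetilde G$ into $\SO(4)$, combined with the explicit Hopf classification of non‐cyclic free subgroups of $\SO(4)$ (binary dihedral, binary tetrahedral, octahedral, icosahedral) and case‐by‐case centralizer computations coordinated with the lifted fixed‐point data. The $\mathbb{RP}^3$ part of the cyclicity step is also somewhat subtle because the centralizer of $-I\in\SO(4)$ is all of $\SO(4)$, so the argument must invoke the fixed‐point count rather than a centralizer calculation.
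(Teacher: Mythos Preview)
Your scaffolding---equivariant sphere system, cap off the boundary spheres, analyze each closed irreducible piece with its cone-point fixed point---matches the paper's. The substantive divergence is in how you show each piece $N$ must be a lens space. The paper does this in one stroke by a Borel-style Smith-theory argument: lift a nontrivial $f\in G$ to $\widetilde N$ using the cone point, so the lift $F$ commutes with the deck group and still has a fixed point; then $\mathrm{Fix}(F)$ is a connected, simply connected smooth $1$-manifold, hence $\cong\R$ when $\widetilde N$ is contractible (forcing $\pi_1 N$ to act freely on $\R$, so $\pi_1 N\cong\Z$, impossible for a closed aspherical $3$-manifold) and $\cong S^1$ when $\widetilde N\cong S^3$ (forcing $\pi_1 N$ to act freely on $S^1$, hence to be cyclic). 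This single argument replaces your entire hyperbolic/Seifert/spherical-noncyclic trichotomy and never appeals to geometrization of the $G$-action. For cyclicity the paper is also more direct: under the ``no sphere bounds a ball, no two spheres parallel'' normalization, any $S^3$ component of $M_{\mathbb S}$ meets at least three system spheres and hence carries at least three $G$-fixed cone points; the Smith conjecture then conjugates $G$ into $\SO(4)$ with three fixed points, so $G\subset\SO(2)$.

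Your route, as written, has genuine gaps. First, you pass silently from ``$G$ trivial in $\Out(\pi_1 M)$'' to ``$G$ trivial on $\pi_1(M_i)$ at the cone point''; this pointed statement is exactly what the lifting arguments need, and it is a nontrivial free-product lemma that the paper proves separately. Second, your aspherical dichotomy ``hyperbolic or Seifert'' omits Sol-manifolds and manifolds with nontrivial JSJ, and the step ``isotopic to the identity and has a fixed point $\Rightarrow$ equals the identity'' is not true on its own---you would need geometrization of the finite group action on $M_i$ (Dinkelbach--Leeb type results) to push it through, and you have not invoked that. Third, in the cyclicity step the centralizer of $\Z_p$ in $\SO(4)$ is not always $T^2$ (for $L(p,1)$ it is all of $U(2)$), and the ``at least two fixed points'' claim for an $\mathbb{RP}^3$ piece fails whenever that piece meets only one sphere of the system, as already happens for $M=\mathbb{RP}^3\#\mathbb{RP}^3$.
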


\p{Section outline} In \S\ref{sec:sphere-twists}, we recall results about sphere twists and the twist group. In \S\ref{sec:invariant-spheres} we explain results from minimal surface theory that allow us to decompose a given action into actions on irreducible 3-manifolds. In \S\ref{sec:obstruction} and \ref{sec:construction} we prove the ``obstruction" and ``construction" parts of the Main Theorem, respectively. In \S\ref{Burn}, we prove Theorem \ref{Burnside}.

\p{Acknowledgement} The authors are supported by NSF grants DMS-2203178, DMS-2104346 and DMS-2005409. This work is also supported by NSF Grant No. DMS-1439786 while the first author visited ICERM in Spring 2022.

\section{Sphere twists and the twist subgroup} \label{sec:sphere-twists}

In this section we collect some facts about sphere twists and the group they generate. In \S\ref{sec:individual-twist} we recall the definition of sphere twists and recall that they act trivially on $\pi_1(M)$ and $\pi_2(M)$. In \S\ref{sec:twist-group} we give a computation for the twist group of any closed, oriented 3-manifold (Theorem \ref{thm:twist-group}). The computation can be deduced by combining different results from the literature and gives a precise generating set. 

\subsection{Sphere twists and their action on homotopy groups} \label{sec:individual-twist}

Fix a closed oriented $3$-manifold $M$. We recall the definition of a sphere twist. Fix an embedded $2$-sphere $S\subset M$ with a tubular neighborhood $U\cong S\times [0,1]\subset M$, and fix a closed path $\phi:[0,1]\to\SO(3)$ based at the identity that generates $\pi_1(\SO(3))$. Define a diffeomorphism of $U$ by 
\begin{equation}\label{eqn:twist}
T_S(x,t)=\big(\phi(t)(x),t\big)
\end{equation}
and extend by the identity to obtain a diffeomorphism $T_S$ of $M$. The isotopy class $\tau_S\in\Mod(M)$ of $T_S$ is called a \emph{sphere twist}. The \emph{twist subgroup} of $\Mod(M)$, denoted $\Twist(M)$, is the subgroup generated by all sphere twists. 

\begin{lemma}[Action of sphere twists on homotopy groups]\label{lem:twist-action}
Let $M$ be a closed, oriented $3$-manifold with a $2$-sided embedded sphere $S\subset M$. Then $\tau_S$ acts trivially on $\pi_1(M)$ and $\pi_2(M)$. 
\end{lemma}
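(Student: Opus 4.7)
The strategy is to exploit that $T_S$ is the identity outside the collar $U \cong S^2 \times [0,1]$ and, because $\phi(0)=\phi(1)=\mathrm{id}$, also the identity on $\partial U$. Both statements then reduce to a local analysis of $T_S$ inside a single copy of $S^2 \times I$.

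For the action on $\pi_1(M)$, I would take a loop $\gamma$ based at a point outside $U$ and perturbed to be transverse to $S$. Then $\gamma$ decomposes as an alternating concatenation of arcs $\alpha_i \subset M \setminus U$ (fixed pointwise by $T_S$) and arcs $\beta_j \subset U$ whose endpoints lie on $\partial U$ and are therefore also fixed by $T_S$. Because $\pi_1(S^2 \times I)=0$, each $\beta_j$ is homotopic rel endpoints to $T_S \circ \beta_j$ inside $U$; splicing these local homotopies back into the unchanged arcs $\alpha_i$ yields a global homotopy $\gamma \simeq T_S \circ \gamma$.

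For $\pi_2(M)$, I would pass to the universal cover $p : \widetilde M \to M$. The $\pi_1$-triviality just established lets $T_S$ lift to a diffeomorphism $\widetilde T_S$ of $\widetilde M$, and a short check---using $T_S|_{\partial U}=\mathrm{id}$ together with the connectedness of each component of $p^{-1}(U)\cong S^2\times I$---shows $\widetilde T_S$ is supported on $p^{-1}(U)$ and acts on each component $U^{(i)}$ as a sphere twist about the lift of $S$ sitting inside it. By Hurewicz, $\pi_2(M)=\pi_2(\widetilde M)=H_2(\widetilde M;\mathbb{Z})$, so it suffices to prove $(\widetilde T_S)_*$ is the identity on $H_2(\widetilde M)$. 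Represent a class by a smooth $f : S^2 \to \widetilde M$ transverse to every lift of $S$; only finitely many lifts meet $f(S^2)$ by compactness. Using a smooth triangulation compatible with $f^{-1}(p^{-1}(U))$, split the cycle $c:=f_*[S^2]$ as $c = c_U + c_{\mathrm{out}}$ with $c_U$ supported in $p^{-1}(U)$ and $c_{\mathrm{out}}$ supported off $p^{-1}(U)$, so that
\[
(\widetilde T_S)_* c - c \;=\; \widetilde T_S(c_U) - c_U.
\]
Within each component $U^{(i)}$, the vanishing $H_2(S^2 \times I,\,S^2 \sqcup S^2)=0$ (immediate from the long exact sequence of the pair, the map $H_2(\partial)\to H_2$ being surjective) allows me to write $c_U^{(i)} = \partial D^{(i)} + e^{(i)}$ with $D^{(i)}$ a 3-chain in $U^{(i)}$ and $e^{(i)}$ a 2-chain in $\partial U^{(i)}$. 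Since $\widetilde T_S$ fixes $\partial U^{(i)}$ pointwise, this gives $\widetilde T_S(c_U^{(i)})-c_U^{(i)} = \partial\bigl(\widetilde T_S(D^{(i)})-D^{(i)}\bigr)$, and summing over $i$ exhibits $(\widetilde T_S)_* c - c$ as a boundary in $\widetilde M$.

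I expect the main obstacle to be the $\pi_2$ step. A naive attempt to homotope $f$ directly to $T_S\circ f$ annulus-by-annulus inside $U$ runs into a relative obstruction in $[V/\partial V,\,U]\cong\pi_2(U)$ (since for $V$ an annulus component of $f^{-1}(U)$, $V/\partial V\simeq S^2\vee S^1$), which need not vanish on the nose; the homological reformulation in the universal cover is what sidesteps this. The piece of bookkeeping most likely to need care is verifying that the chosen lift $\widetilde T_S$ is genuinely the identity on every component of $p^{-1}(M\setminus U)$---not merely on the component containing the basepoint---so that its support really is $p^{-1}(U)$; this propagates from the basepoint component by an inductive use of $T_S|_{\partial U}=\mathrm{id}$ together with the connectedness of each lift of $U$.
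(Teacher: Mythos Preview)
Your argument is correct, and it is genuinely different from (and more self-contained than) what the paper does. The paper treats this lemma as folklore: for $\pi_1$ it points to McCullough and sketches that, after choosing a prime decomposition, every loop based at a fixed point of $T_S$ can be represented in the fixed set of $T_S$; for $\pi_2$ it simply cites \cite[Lem.~1.1]{mccullough} and a general result of Laudenbach (trivial on $\pi_1(M,*)$ implies trivial on $\pi_2(M,*)$, see \cite[Thm.~2.4]{BBP}). By contrast, your $\pi_1$ argument avoids the prime decomposition entirely, using only that $U\simeq S^2\times I$ is simply connected; and your $\pi_2$ argument is a direct homological computation in the universal cover via Hurewicz and the vanishing $H_2(S^2\times I,\partial)=0$, rather than an appeal to the literature. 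What the paper's route buys is brevity; what yours buys is a proof that actually stays on the page. Your diagnosis that the naive ``homotope annulus-by-annulus'' approach to $\pi_2$ fails is also exactly right---$T_S|_U$ is \emph{not} homotopic rel $\partial U$ to the identity (the obstruction lives in $\pi_1(\mathrm{Map}(S^2,S^2),\mathrm{id})\cong\mathbb Z/2$), which is why the homological detour is needed.
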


\begin{remark}[Action on $\pi_1(M)$ vs.\ $\pi_1(M,*)$]
When we refer to the action of a diffeomorphism $f\in\Diff(M)$ on $\pi_1(M)$, we mean as an outer automorphism. Technically, this is not an action, but there is a well-defined homomorphism $\Diff(M)\to\Out(\pi_1(M))$. When $f$ has a fixed point $*\in M$, the action of $f$ on $\pi_1(M,*)$ refers to the induced automorphisms $f_*:\pi_1(M,*)\to\pi_1(M,*)$. This distinction will be important in later sections. If $f$  acts trivially on $\pi_1(M)$ it can be isotoped to $f'\in\Diff(M,*)$ that acts on $\pi_1(M,*)$ by conjugation (generally nontrivial). 
\end{remark}

Lemma \ref{lem:twist-action} is well-known. That sphere twists act trivially on $\pi_1(M)$ is implicit in \cite{mccullough}. This fact may be proved as follows. Let $*\in M$ be a fixed point of the diffeomorphism $T_S$ defined in (\ref{eqn:twist}). After choosing a prime decomposition of $M$, one can show that each element of $\pi_1(M,*)$ is represented by a loop contained entirely in the fixed set of $T_S$. That sphere twists act trivially on $\pi_2(M)$ follows from \cite[Lem.\ 1.1]{mccullough}. It can also be deduced from a general result of Laudenbach that says that if $f\in\Diff(M,*)$ acts trivially on $\pi_1(M,*)$, then it also acts trivially on $\pi_2(M,*)$; see \cite[Thm.\ 2.4]{BBP}.

%\subsection{Generating the twist group and some computations} \label{sec:twist-group}

\subsection{Generators and relations in the twist group} \label{sec:twist-group}

In this section we compute $\Twist(M)$ for every closed, oriented 3-manifold. 

\begin{theorem}\label{thm:twist-group}
Let $M$ be a closed, orientable 3-manifold with prime decomposition $M=\#_k(S^1\times S^2)\#P_1\#\cdots\#P_\ell$, where the $P_i$ are irreducible. Let $\ell'\le\ell$ be the number of the $P_i$ that are lens spaces. Then 
\[\Twist(M)\cong\begin{cases}
(\Z/2\Z)^k&\text{ if } \ell=\ell'\\
(\Z/2\Z)^{k+\ell-\ell'-1}&\text{ if } \ell>\ell'.
\end{cases}\]
\end{theorem}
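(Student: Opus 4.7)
The plan is to present $\Twist(M)$ as a quotient of $(\Z/2\Z)^{k+\ell}$ with explicit generators indexed by the prime decomposition and explicit relations, and then to do the bookkeeping. First, I would fix a sphere system $\Sigma \subset M$ adapted to the decomposition: one non-separating sphere $N_j$ inside each $S^1 \times S^2$ summand ($j=1,\ldots,k$), and one separating sphere $S_i$ bounding a punctured copy of each $P_i$ ($i=1,\ldots,\ell$). Cutting $M$ along $\Sigma$ yields the $\ell$ pieces $P_i \setminus B^3$ together with a central region $C$ diffeomorphic to $S^3$ minus $2k+\ell$ open balls. Appealing to McCullough's structural theorem (that $\Twist(M)$ is an elementary abelian $2$-group) together with standard sphere-complex arguments, I would argue that the twists $\tau_{N_1},\ldots,\tau_{N_k},\tau_{S_1},\ldots,\tau_{S_\ell}$ generate $\Twist(M)$, exhibiting it as a quotient of $(\Z/2\Z)^{k+\ell}$.

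Next, I would show the lens-space relations: if $P_i = L(p,q)$, then $\tau_{S_i}=1$ in $\Mod(M)$. The Hopf $S^1$-action on $S^3$ descends to a smooth $S^1$-action on $L(p,q)$; localizing the corresponding $2\pi$-loop in $\Diff(L(p,q))$ near a suitably chosen ball produces an isotopy from the identity of $L(p,q)$ to a diffeomorphism whose support is a collar of a $2$-sphere and whose action on that collar realizes the sphere twist. Transplanting this isotopy into the connect-sum decomposition of $M$ shows that $T_{S_i}$ is isotopic to the identity in $\Diff(M)$, so $\tau_{S_i}=1$.

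The most delicate step is the global relation $\prod_{i=1}^\ell \tau_{S_i} = 1$, which I would prove by working inside the central region $C$. The generator of $\pi_1\SO(4)\cong\Z/2\Z$, realized by a $2\pi$-rotation $U(t)$ in a fixed $2$-plane of $\R^4$, pointwise fixes a great circle $\gamma\subset S^3$. Arranging the $2k+\ell$ removed balls to be centered on $\gamma$, each is preserved by every $U(t)$, so $U$ restricts to a loop $\alpha$ in $\Diff(C)$. On each boundary sphere of $C$, $\alpha$ is a full $2\pi$-rotation about an axis, representing the generator of $\pi_1\Diff(S^2)\cong\Z/2\Z$. In the restriction fibration $\Diff_\partial(C)\to\Diff(C)\to\Diff(\partial C)$, the connecting homomorphism sends $[\alpha|_{\partial C}]$ to the product of the sphere twists on all boundary components of $C$; since $[\alpha|_{\partial C}]$ lifts to $\pi_1\Diff(C)$ via $[\alpha]$, exactness forces this product to vanish in $\pi_0\Diff_\partial(C)$. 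Regluing $\Sigma$, the two boundary twists associated with each $N_j$ combine as $\tau_{N_j}^2=1$ and cancel, leaving $\prod_{i=1}^\ell \tau_{S_i}=1$ in $\Twist(M)$.

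Finally, I would assemble the count. In $(\Z/2\Z)^{k+\ell}$ modulo the $\ell'$ lens-space relations and the global relation: when $\ell=\ell'$, the global relation becomes the empty product $1=1$ and imposes nothing, so $\Twist(M)\cong(\Z/2\Z)^{k+\ell-\ell'}=(\Z/2\Z)^k$; when $\ell>\ell'$, the global relation is independent of the lens-space relations and reduces the rank by one further, giving $\Twist(M)\cong(\Z/2\Z)^{k+\ell-\ell'-1}$. The principal technical hurdle is the global relation: carefully tracking the rotation loop through the fibration to produce a relation in $\Mod(M)$ itself, rather than merely in a relative mapping class group of the cut-open central region.
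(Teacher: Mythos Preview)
Your outline establishes only one direction: you exhibit generators and then produce relations (the lens-space relations and the global relation), which shows that $\Twist(M)$ is a \emph{quotient} of $(\Z/2\Z)^{k+\ell}$ modulo those relations. But you never prove the reverse inequality---that these are \emph{all} the relations. Concretely, you have not argued that (i) the belt-sphere twists $\tau_{N_j}$ are nontrivial and independent in $\Mod(M)$, (ii) $\tau_{S_i}\neq 1$ when $P_i$ is \emph{not} a lens space, and (iii) no further relation beyond the global one appears when you pass from the cut-open manifold back to $M$. Without these, your final ``bookkeeping'' step is computing the size of a group that merely surjects onto $\Twist(M)$, not $\Twist(M)$ itself.

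These missing steps are in fact the hardest part of the argument. The paper handles them as follows. For (i) it cites Laudenbach (or the framing argument of Brendle--Broaddus--Putman). For (ii) it invokes a theorem of Hendriks and Friedman--Witt: the boundary-parallel sphere twist in $P_i\setminus\operatorname{int}(D_i)$ is trivial rel boundary \emph{if and only if} $P_i$ is a lens space; the ``only if'' direction for prism manifolds ultimately relies on the generalized Smale conjecture. For (iii) the paper works in the relative group $\Mod(M,D)$ and uses the fibration $\Diff(M,D)\to\Diff^+(M)\to\Emb(D,M)$ together with $\pi_1(\Emb(D,M))\cong\pi_1(M)\times\Z/2\Z$: the $\Z/2\Z$ contributes exactly the global relation, and any putative extra relation coming from $\pi_1(M)$ is ruled out because point-pushes act by nontrivial conjugation on $\pi_1(M,*)$ (the center of a nontrivial free product is trivial), whereas sphere twists act trivially on $\pi_1(M,*)$. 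Your rotation-loop argument for the global relation is fine and essentially equivalent to the paper's pants relation plus frame-bundle computation, but it only gives existence of the relation, not uniqueness of the kernel.
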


We were not able to find Theorem \ref{thm:twist-group} in the literature, although it can be deduced by combining various old results.

For the proof of Theorem \ref{thm:twist-group}, we will use the following explicit construction of $M$. Let $X$ be the complement of $\ell+2k$ open disks in $S^3$. For each $P_i$ choose a closed embedded disk $D_i\subset P_i$, and let $Y$ be the compact manifold 
\begin{equation}\label{eqn:prime-factors}Y:=\left[\coprod_{i=1}^\ell P_i\setminus \text{int}(D_i)\right]\sqcup\left[\coprod_{k}S^2\times[-1,1]\right].\end{equation}
We form $M$ by gluing $X$ and $Y$ along their boundary $\partial X\cong\partial Y$. See Figure \ref{fig:sphere-twist}. 

\begin{figure}[h!]
\labellist
\pinlabel $X$ at 100 80
\pinlabel $P_1$ at 10 40
\pinlabel $P_2$ at 115 10
\pinlabel $P_3$ at 190 50
\pinlabel $[-1,1]\times S^2$ at -20 110
\pinlabel $[-1,1]\times S^2$ at 215 110
\endlabellist
\centering
\includegraphics[scale=.6]{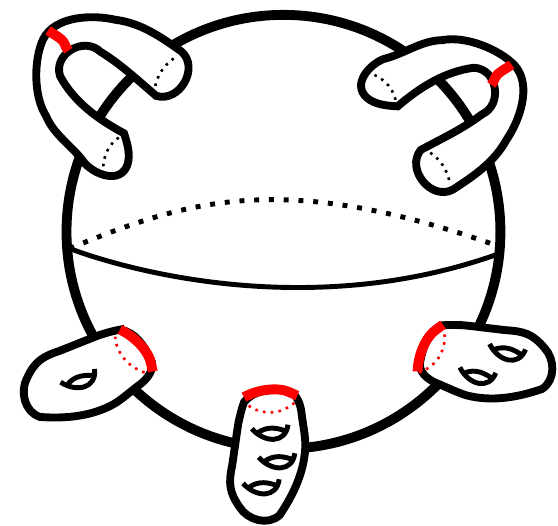}
\caption{Construction of $M=\#_k(S^1\times S^2)\# P_1\#\cdots\# P_\ell$. The red set represents spheres that generate $\Twist(M)$.}
\label{fig:sphere-twist}
\end{figure}

\begin{lemma}[Pants relation]\label{lem:pants}
Let $Z$ be the complement in $S^3$ of three disjoint $3$-balls. Let $S_1,S_2,S_3$ be the three boundary components of $Z$. Then $\tau_{S_1} \tau_{S_2} \tau_{S_3}=1$ in $\Mod(Z)$. 
\end{lemma}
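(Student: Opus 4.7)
The plan is to exhibit an explicit isotopy rel $\partial Z$ from the identity to $T_{S_1}\circ T_{S_2}\circ T_{S_3}$, produced by modifying a global rotation of $S^3$. The key idea is that a full $2\pi$ rotation of $S^3$ is a loop based at the identity, and after being ``undone'' inside three rotation-invariant balls it deposits a sphere twist in a collar of each boundary sphere.

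First I would fix coordinates. Write $S^3\subset\R^4$ and let $A=S^3\cap\{x_1=x_2=0\}$ be a great circle. Let $f_t:S^3\to S^3$ be rotation by $2\pi t$ in the $x_1x_2$-plane, so $f_0=f_1=\mathrm{id}$, $f_t$ fixes $A$ pointwise, and $f_t$ preserves every geodesic ball of $S^3$ centered on $A$. Since any three disjoint balls in $S^3$ are ambient-isotopic, I may assume $B_1,B_2,B_3$ are small geodesic balls centered at three distinct points of $A$; fix also pairwise disjoint, rotation-invariant collar neighborhoods $N_i\cong S_i\times[0,1]\subset Z$ with $S_i\times\{0\}=S_i=\partial B_i$ (e.g.\ concentric sphere shells around the centers).

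Define $g_t:S^3\to S^3$ by
\[
g_t(y)=\begin{cases} y, & y\in B_i,\\ \bigl(\rho_i(2\pi tu)\cdot x,\,u\bigr), & y=(x,u)\in N_i,\\ f_t(y), & y\in Z\setminus\bigcup_i N_i,\end{cases}
\]
where $\rho_i(\theta)\in\SO(3)$ is rotation by $\theta$ about the diameter $A\cap B_i$ of $S_i$, chosen so that $\rho_i(2\pi t)$ agrees with $f_t$ on the outer boundary sphere $S_i\times\{1\}$ of $N_i$. The collar formula specializes to the identity at $u=0$ and to $f_t$ at $u=1$, so $g_t$ is a smooth diffeomorphism of $S^3$; it is the identity on $\partial Z$ and preserves each $B_i$, hence restricts to an isotopy in $\Diff(Z,\partial Z)$.

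Clearly $g_0=\mathrm{id}$. At $t=1$, $f_1=\mathrm{id}$, so $g_1$ is supported in $\bigcup_i N_i$, and there it is $(x,u)\mapsto(\rho_i(2\pi u)x,u)$. Since $u\mapsto\rho_i(2\pi u)$ generates $\pi_1(\SO(3))$, this is precisely a sphere twist $T_{S_i}$ in the sense of \eqref{eqn:twist}; the collars being disjoint, $g_1|_Z=T_{S_1}\circ T_{S_2}\circ T_{S_3}$. Hence $\tau_{S_1}\tau_{S_2}\tau_{S_3}=1$ in $\Mod(Z)$ (in fact in $\Mod(Z,\partial Z)$). The only conceptually nontrivial step is placing the three balls on a common rotation axis of $S^3$ so that a single $\SO(4)$ loop preserves them simultaneously; everything else is routine smoothing at the collar interfaces.
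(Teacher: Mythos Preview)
Your argument is correct and gives a clean, symmetric proof of the relation. It differs genuinely from the paper's approach. The paper proceeds locally and asymmetrically: it chooses a properly embedded arc $\alpha$ joining $S_1$ to $S_2$, observes that a regular neighborhood of $S_1\cup\alpha\cup S_2$ is itself a copy of $Z$ whose third boundary sphere $S_3'$ is parallel to $S_3$, and then slides $T_{S_1}\circ T_{S_2}$ along $\alpha$ until it becomes a single twist about $S_3'$, hence about $S_3$. Your approach is global and symmetric: you realize $Z$ inside $S^3$ so that a one-parameter family of ambient isometries $f_t\in\SO(4)$ preserves all three balls simultaneously, and then damp $f_t$ to the identity across collars of the $S_i$; since $f_1=\mathrm{id}$, the time-$1$ map is supported in the collars and is visibly $T_{S_1}T_{S_2}T_{S_3}$.

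Each approach has its virtues. Yours makes the relation manifestly symmetric in the three spheres and yields explicit formulas; it also makes transparent why the number three is irrelevant---the same construction with $n$ balls centered on the axis $A$ gives $\tau_{S_1}\cdots\tau_{S_n}=1$ in one stroke. The paper's arc argument, by contrast, is intrinsic to $Z$: it uses no ambient metric or embedding, only that a neighborhood of two boundary spheres joined by an arc is again a three-holed $3$-sphere. That makes it more portable (it transplants verbatim into any $3$-manifold containing a three-holed ball) and is the form in which the relation is actually invoked later, when the authors deduce $\tau_S=\tau_{S_1}\cdots\tau_{S_\ell}$ by iterated application. The two arguments are ultimately avatars of the same fact---that the loop of $2\pi$ rotations of a $2$-sphere becomes null once embedded as a loop of diffeomorphisms of the bounding ball---seen from complementary sides.
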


Lemma \ref{lem:pants} can be proved in an elementary fashion by constructing an explicit isotopy. We explain the idea briefly. Fix a properly embedded arc $\alpha\subset Z$ joining the spheres $S_1$ and $S_2$. Let $N$ be a regular neighborhood of $S_1\cup\alpha\cup S_2$. Observe that $N$ is diffeomorphic to $Z$ and one of its boundary components $S_3'$ is parallel to $S_3$. Using $\alpha$ is a guide, one can isotope $T_{S_1}\circ T_{S_2}$ to a diffeomorphism supported on a collar neighborhood of $S_3'$ and that is a sphere twist on this collar neighborhood. Since $S_3'$ is parallel to $S_3$, this gives the desired relation.

\begin{proof}[Proof of Theorem \ref{thm:twist-group}] We prove the theorem in 3 steps. 

\p{Step 1: a generating set for \boldmath $\Twist(M)$} For $i=1,\ldots,\ell$, fix an embedded sphere $S_i\subset Y$ that is parallel to the boundary component of $P_i\setminus\text{int}(D_i)$. For $j=1,\ldots,k$, let $S_j'\subset Y$ be the embedded sphere $S^2\times\{0\}$ in the $j$-th copy of $S^2\times[0,1]$. Let $\Twist(Y)$ be the subgroup of $\Mod(Y)$ generated by the sphere twists $\{\tau_{S_1},\ldots,\tau_{S_\ell}\}\cup\{\tau_{S_1'},\ldots,\tau_{S_k'}\}$. (Recall that the mapping class group $\Mod(Y)$ of a manifold with boundary is the group $\Diff_\partial(Y)$ of diffeomorphisms that restrict to the identity on $\partial Y$, modulo isotopies that are the identity on $\partial Y$.) 
Consider the composition 
\begin{equation}\label{eqn:twist-composition}(\Z/2\Z)^{\ell+k}\xrightarrow{\rho}\Twist(Y)\xrightarrow{\pi}\Twist(M),\end{equation}
where 
\[\rho(a_1,\ldots,a_\ell,b_1,\ldots,b_k)=\tau_{S_1}^{a_1}\>\cdots\>\tau_{S_\ell}^{a_\ell}\>\tau_{S_1'}^{b_1}\>\cdots\>\tau_{S_k'}^{b_k},\] 
and $\pi$ is the restriction of the homomorphism $\Mod(Y)\to\Mod(M)$. The composition $\pi\circ\rho$ is surjective by \cite[Prop.\ 1.2]{mccullough}. In the rest of the proof we compute the kernels of $\pi$ and $\rho$. 

\p{Step 2: global relation among sphere twists} In this step we compute the kernel of $\pi:\Twist(Y)\to\Twist(M)$. 

Let $D\subset X\subset M$ be an embedded ball. Let $\Emb(D,M)$ be the space of embeddings that respect the orientation. Let $\Emb_D^e(X,M)$ be the space of embeddings $X\to M$ that (i) restrict to the inclusion on $D$ and (ii) that extend to a diffeomorphism of $M$. Consider the following diagram, which consists of two fiber sequences (c.f.\ \cite{palais}). 

\[
\begin{xy}
(-30,0)*+{\Diff_\partial(Y)}="A";
(0,0)*+{\Diff(M,D)}="B";
(30,0)*+{\Emb_{D}^e(X,M)}="C";
(0,-15)*+{\Diff^+(M)}="E";
(0,-30)*+{\Emb(D,M)}="H";
{\ar"A";"B"}?*!/_3mm/{};
{\ar "B";"C"}?*!/_3mm/{};
{\ar "B";"E"}?*!/_3mm/{};
{\ar "E";"H"}?*!/_3mm/{};
\end{xy}\]

The ``horizontal" fiber bundle in the diagram splits as a product by \cite[Thm.\ 1]{hendriks-mccullough}. Consequently, $\Mod(Y)\to\Mod(M,D)$ is injective. Then from the proceeding diagram, we obtain 
\[\begin{xy}
(-30,-15)*+{\Mod(Y)}="A";
(0,0)*+{\pi_1\big(\Emb(D,M)\big)}="B";
(0,-15)*+{\Mod(M,D)}="E";
(0,-30)*+{\Mod(M)}="H";
{\ar@{^{(}->}"A";"E"}?*!/_3mm/{};
{\ar "B";"E"}?*!/_3mm/{\delta};
{\ar "E";"H"}?*!/_3mm/{};
\end{xy}\]
As is well-known, the space $\Emb(D,M)$ is homotopy equivalent to the (oriented) frame bundle of $M$, which is diffeomorphic to $M\times\SO(3)$ since closed oriented 3-manifolds are parallelizable. Then $\pi_1\big(\Emb(D,M)\big)\cong\pi_1(M)\times\Z/2\Z$. A generator of the $\Z/2\Z$ factor maps under $\delta$ to a sphere twist about $S:=\partial D$. From Lemma \ref{lem:pants} we deduce that $\tau_S=\tau_{S_1}\cdots\tau_{S_\ell}$ in $\Mod(M,D)$, so $\tau_{S_1}\cdots\tau_{S_\ell}$ belongs to $\ker(\pi)$. 

We claim that $\ker(\pi)=\langle\tau_{S_1}\cdots\tau_{S_\ell}\rangle$. To see that $\ker(\pi)$ is not larger, it suffices to show that if $\gamma\in\pi_1(M)<\pi_1\big(\Emb(D,M)\big)$ is nontrivial, then $\delta(\gamma)$ is not in the image of $\Twist(Y)\to\Mod(M,D)$. This is easy to see because each element of $\Twist(Y)<\Mod(M,D)$ acts trivially on $\pi_1(M,*)$ (where the basepoint $*$ belongs to $D$), whereas $\delta(\gamma)$ acts by a nontrivial conjugation on $\pi_1(M,*)$ (note that the fundamental group of a reducible 3-manifold has trivial center). 

\p{Step 3: local triviality of sphere twists} 
Here we compute the kernel of the map $\rho:(\Z/2\Z)^{\ell+k}\to\Twist(Y)$ defined in (\ref{eqn:twist-composition}). Since the spheres $S_1,\ldots,S_\ell,S_1',\ldots,S_k'$ belong to distinct components of $Y$, it suffices to determine which of the given generators for $\Twist(Y)$ is trivial in $\Twist(Y)$. 
%(i.e.\ a product of the given generators in $\Twist(Y)$ is trivial if and only if each of the individual twists is trivial in $\Twist(Y)$). 
Sphere twists in $S^2\times[-1,1]$ components are nontrivial by \cite{laudenbach}. See also \cite{BBP} who prove this by considering the action on framings. It remains then to consider when the twists $\tau_{S_i}$ (about the boundary of $P_i\setminus\text{int}(D_i)$) are nontrivial. By  Theorem \ref{thm:HFW} below, $\tau_{S_i}\in\Mod(P_i,D_i)<\Mod(Y)$ is trivial if and only if $P_i$ is a lens space. Combining this with the proceeding steps finishes the proof. 
\end{proof}

\begin{theorem}[Hendriks, Friedman-Witt]\label{thm:HFW}
Let $P$ be an irreducible $3$-manifold. Fix an embedded ball $D\subset P$, and let $\tau_S\in\Mod(P,D)$ be the sphere twist about a sphere $S$ parallel to $\partial D$. Then $\tau_S=1$ if and only if $P$ is a lens space. 
\end{theorem}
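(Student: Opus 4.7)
The plan is to reinterpret $\tau_S$ as the image of a framing homomorphism, and then analyze surjectivity via explicit lens-space actions and Smale-type theorems. Applying the long exact sequence to the Palais fibration $\Diff(P, D) \to \Diff^+(P) \to \Emb(D, P)$ yields a connecting map $\delta \colon \pi_1(\Emb(D, P)) \to \pi_0(\Diff(P, D)) = \Mod(P, D)$. Since every closed oriented $3$-manifold is parallelizable, the frame bundle trivializes, giving $\Emb(D, P) \simeq P \times \SO(3)$ and hence $\pi_1(\Emb(D, P)) \cong \pi_1(P) \times \Z/2$, with the $\Z/2$ factor equal to $\pi_1(\SO(3))$. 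By construction of the sphere twist, $\delta$ sends the generator of this $\Z/2$ to $\tau_S$. Consequently, $\tau_S = 1$ if and only if the composite ``framing'' homomorphism $\phi \colon \pi_1(\Diff^+(P)) \to \pi_1(\Emb(D, P)) \twoheadrightarrow \Z/2$ is surjective, reducing the theorem to a characterization of when $\phi$ is surjective.

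For the ``if'' direction, I would exhibit, for each lens space $P$, a loop in $\Diff^+(P)$ with nontrivial framing class. For $P = S^3$ one can use the loop $t \mapsto \mathrm{diag}(1, R_t) \in \SO(4)$, where $R_t$ is a generator of $\pi_1(\SO(3))$; this loop fixes a point and rotates the frame there through the nontrivial class. For $P = L(p, q) = S^3/\Z_p$ with $p \geq 2$, the standard maximal torus $T^2 \subset \SO(4)$ centralizes (and contains) $\Z_p$, so it descends to a smooth effective $T^2$-action on $L(p, q)$. A loop in $T^2/\Z_p$ corresponding to a primitive lift through $\Z_p$ gives a loop in $\Diff^+(L(p,q))$, and a direct computation of the associated derivative class on $S^3$ — using the left-invariant framing of $S^3 \cong SU(2)$ — shows that the framing contribution realizes the generator of $\pi_1(\SO(3))$.

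For the ``only if'' direction, I would invoke the Smale conjecture and its generalizations across the Thurston geometries to replace $\Diff^+_0(P)$ by $\Isom_0(P)$ for an appropriate geometric structure on $P$. For closed hyperbolic $P$, Gabai's theorem gives $\Isom_0(P)$ trivial, hence $\pi_1(\Diff^+_0(P)) = 0$ and $\phi$ is trivially zero. For Seifert fibered, $\Sol$, $\Nil$, and non-lens spherical $P$, results of Hatcher, Ivanov, McCullough--Soma, and Hong--Kalliongis--McCullough--Rubinstein identify $\Diff^+_0(P) \simeq \Isom_0(P)$, whose fundamental group is generated by circle actions along Seifert fibers or by rotations of homogeneous pieces. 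The main obstacle is verifying, in each of these geometric cases, that the framing contribution of such a generating loop is zero mod $2$. I would compute by restricting to a well-chosen point (a regular orbit or an exceptional fiber) and expressing the induced frame rotation in terms of the Seifert invariants; a case analysis then shows that the framing generator is attainable only when the Seifert data corresponds to a cyclic fundamental group, i.e.\ precisely when $P$ is a lens space.
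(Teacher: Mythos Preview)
Your reformulation via the Palais fibration and the framing map $\phi$ is correct and is essentially the mechanism behind Friedman--Witt's argument. The ``if'' direction is fine: for $L(p,q)$ the circle subgroup $\{(1,v)\}\subset T^2$ descends, has fixed points, and at a fixed point the derivative loop generates $\pi_1(\SO(3))$, so indeed $\tau_S=1$.

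The ``only if'' direction is where your proposal and the paper diverge, and where there are gaps. First, your case division covers only \emph{geometric} irreducible $P$; closed irreducible $3$-manifolds with nontrivial JSJ decomposition are not geometric, and you do not address them. (They are Haken and not Seifert fibered, so Hatcher's work on diffeomorphism groups of Haken manifolds would give $\pi_1(\Diff^+_0(P))=0$ here, but this case must be stated and handled.) Second, and more substantively, the promised ``case analysis'' showing that the generating circle actions on non-lens spherical manifolds have trivial framing class is precisely where the content lies. For prism manifolds this is the heart of \cite[Thm.\ 2.2]{friedman-witt}: one must identify $\pi_1(\Isom_0(P))$ and verify that its generators (coming from the Hopf-fiber circle, which preserves the left-invariant framing of $S^3$) contribute trivially to $\phi$, in contrast to the second circle available only for lens spaces. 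Asserting the outcome of this computation is asserting the theorem.

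By comparison, the paper's route is shorter: Hendriks' homotopy-theoretic result disposes of every $P$ other than lens spaces and prism manifolds in a single stroke (for those $P$ the twist is not even homotopic to the identity rel boundary), so Smale-type input is needed only for the prism case, where the paper invokes \cite[Thm.\ 2.2]{friedman-witt} together with the now-complete generalized Smale conjecture for elliptic manifolds. Your route trades Hendriks' one theorem for the full collection of generalized Smale conjectures across all geometries plus Hatcher's Haken theorem, and at the end still requires the Friedman--Witt prism computation. Both strategies can be made to work, but yours invokes substantially more machinery to reach the same endpoint.
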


\begin{proof}
By work of Hendriks \cite{hendriks} (see \cite[Cor.\ 2.1]{friedman-witt}), the twist $T_S\in\Diff_\partial(P\setminus \text{int}(D))$ is not homotopic to the identity (rel boundary), unless $P$ is either a lens space or a prism manifold (the latter are manifolds covered by $S^3$ whose fundamental group is an extension of a dihedral group). 

First consider the case when $P$ is a lens space. Since we assume $P$ is irreducible, $P\neq S^1\times S^2$, so $P$ is covered by $S^3$. In this case $T_S$ is isotopic to the identity (rel boundary) by \cite[Lem.\ 3.5]{friedman-witt}. (Aside: $T_S$ is also isotopic to the identity when $P=S^1\times S^2$, which can be seen using Lemma \ref{lem:pants}.)

When $P$ is a prism manifold, $T_S$ is not isotopic to the identity (rel boundary). See \cite[Thm.\ 2.2]{friedman-witt}. The statement there does not include one family of prism manifolds $S^3/D_{4m}^*$. This is because the argument uses the (generalized) Smale conjecture, which was not proved for $S^3/D_{4m}^*$ at the time the paper was written. See \cite[Remark after Corollary 2.2]{friedman-witt}. Fortunately, the generalized Smale conjecture has now been confirmed for all prism manifolds (in fact for all elliptic 3-manifold, with the exception of $\R P^3$). See \cite{smale-elliptic} and \cite{balmer-kleiner}.
\end{proof}

For later use, we record the following Corollary of Theorem \ref{thm:twist-group}. For the manifold $S^1\times S^2$, we call a sphere of the form $*\times S^2$ a \emph{belt sphere} (we use this terminology because this sphere can be viewed as the belt sphere of a handle attachment).

\begin{corollary}\label{cor:twist-generator}
Let $M=\#_k(S^1\times S^2)\#P_1\#\cdots\#P_\ell$, and assume each $P_i$ is a lens space. Then $\Twist(M)\cong(\Z/2\Z)^k$ is generated by twists about the belt spheres of the $S^1\times S^2$ summands. 
\end{corollary}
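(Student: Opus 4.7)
My plan is to read the corollary off directly from the proof of Theorem~\ref{thm:twist-group}, using the explicit generating set constructed there together with Theorem~\ref{thm:HFW}. Since every $P_i$ is a lens space we have $\ell = \ell'$, so Theorem~\ref{thm:twist-group} already identifies $\Twist(M) \cong (\Z/2\Z)^k$; only the generating set needs justification.

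Concretely, I would recall from Step~1 of the proof of Theorem~\ref{thm:twist-group} that $\Twist(M)$ is the image of the map
\[
\rho\colon (\Z/2\Z)^{\ell+k} \longrightarrow \Twist(Y) \longrightarrow \Twist(M)
\]
given on generators by the $\tau_{S_i}$ (spheres parallel to $\partial(P_i \setminus \mathrm{int}(D_i))$) and the $\tau_{S_j'}$ (the spheres $S^2 \times \{0\}$ in each $S^2 \times [-1,1]$ component of $Y$). After gluing $Y$ to $X$, each $S_j'$ becomes a belt sphere of the corresponding $S^1 \times S^2$ summand of $M$, so it suffices to show that each $\tau_{S_i}$ is already trivial in $\Twist(M)$.

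This last point is exactly Theorem~\ref{thm:HFW}: since $P_i$ is a lens space, the sphere twist $\tau_{S_i}$ vanishes in $\Mod(P_i, D_i)$, and therefore also in $\Mod(Y)$ and in $\Mod(M)$. Thus the $\tau_{S_j'}$ for $j = 1, \dots, k$ alone suffice to generate $\Twist(M)$. I do not anticipate any genuine obstacle here, since everything needed has been assembled in the preceding proof; the corollary is essentially an explicit bookkeeping of which of the generators from Step~1 survive once the lens-space hypothesis on the $P_i$ is imposed.
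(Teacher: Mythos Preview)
Your argument is correct and is exactly the intended deduction: the paper gives no separate proof of this corollary, treating it as immediate from the proof of Theorem~\ref{thm:twist-group}, and your unpacking---that $\ell=\ell'$ gives the rank, and Theorem~\ref{thm:HFW} kills each $\tau_{S_i}$ so only the belt-sphere twists $\tau_{S_j'}$ survive as generators---is precisely that deduction made explicit.
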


\section{Decomposing finite group actions on 3-manifolds}\label{sec:invariant-spheres}

In this section we explain some general structural results for certain finite group actions on $3$-manifolds, which will allow us to decompose a $G$-manifold $M^3$ into simpler $G$-invariant pieces. For our application to the Main Theorem we are particularly interested in actions that are trivial on $\pi_i(M)$ for $i=1,2$.

\subsection{Equivariant sphere theorem}
The main result of this section is Theorem \ref{thm:invariant-spheres}. In order to state it, we introduce some notation. Let $\mathbb S$ be a collection of disjoint embedded spheres in a 3-manifold $M$. Define $M_{\mathbb S}$ as the result of removing an open regular neighborhood of each $S\in\mathbb S$ and capping each boundary component with a 3-ball. The 3-manifold $M_{\mathbb S}$ is a closed, but usually not connected. This process is pictured in Figure \ref{fig:cut-cap}.

\begin{figure}[h!]
\labellist
\pinlabel $M$ at 60 10
\pinlabel $M_{\mathbb S}$ at 580 10
\endlabellist
\centering
\includegraphics[scale=.6]{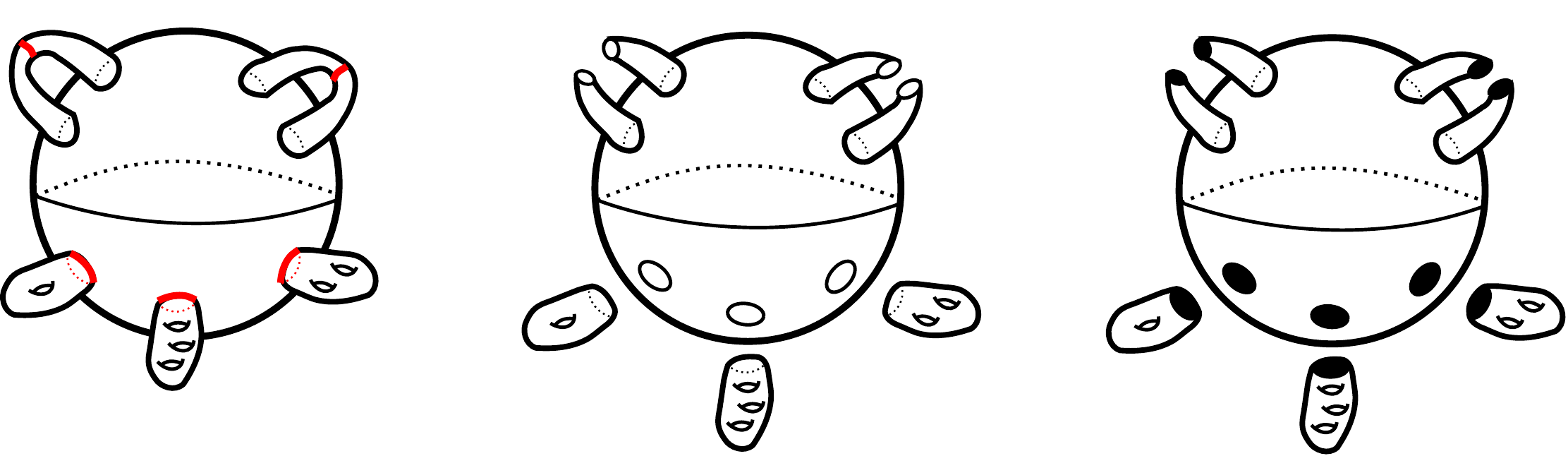}
\caption{Cutting and capping along spheres $M\leadsto M_{\mathbb S}$.}
\label{fig:cut-cap}
\end{figure}

\begin{theorem}\label{thm:invariant-spheres}
Let $M$ be a closed oriented $3$-manifold and let $G$ be a finite subgroup of $\Diff^+(M)$.
\begin{enumerate}
\item There exists a $G$-invariant collection $\mathbb S$ of disjoint embedded spheres in $M$ such that the components of $M_{\mathbb S}$ are irreducible.
\item If $M\neq  S^1\times S^2$ and $G$ acts trivially on $\pi_1(M)$ and $\pi_2(M)$, then $G$ preserves every element in $\mathbb S$. 
\end{enumerate} 
\end{theorem}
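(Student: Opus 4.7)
The plan is to prove Part (1) using the equivariant sphere theorem, then exploit the resulting sphere system together with the structure of $\pi_2(M)$ as a $\Z[\pi_1(M)]$-module in Part (2).

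For Part (1), I would first average an arbitrary Riemannian metric over $G$ to obtain a $G$-invariant metric on $M$. If $M$ is reducible, the equivariant sphere theorem of Meeks--Simon--Yau then produces a $G$-invariant collection of pairwise disjoint embedded minimal $2$-spheres in $M$, each essential in $\pi_2(M)$; disjointness of distinct spheres in the same $G$-orbit is automatic from area minimization in the invariant setting. Iterating on the components of the cut-and-capped manifold (or invoking the full form of the theorem, which produces a maximal essential system) yields a $G$-invariant sphere system $\mathbb S$ whose complementary pieces are irreducible. One should also arrange that no two spheres of $\mathbb S$ are parallel (if two are, throw one away; this preserves $G$-invariance after orbit-averaging). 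If $M$ is irreducible, take $\mathbb S=\emptyset$.

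For Part (2), assume $M\neq S^1\times S^2$ and $G$ acts trivially on $\pi_1(M)$ and $\pi_2(M)$. The goal is to rule out any nontrivial permutation of $\mathbb S$ by $G$. The key structural input is the classical fact (essentially due to Laudenbach; see also Hatcher's \emph{Notes on Basic 3-Manifold Topology}) that after choosing a basepoint $\ast\in M$ and a path to each $S\in\mathbb S$, the corresponding classes $\{[S]\}_{S\in\mathbb S}$ form a free $\Z[\pi_1(M,\ast)]$-module basis of $\pi_2(M,\ast)$; this is exactly where the exclusion of $S^1\times S^2$ is needed. Fix $g\in G$: since $g$ acts trivially on $\pi_1(M)$ as an outer automorphism, after isotopy we may assume $g(\ast)=\ast$ and that $g_\ast$ acts on $\pi_1(M,\ast)$ by conjugation by some $h\in\pi_1(M,\ast)$. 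If $g(S)=S'$ with $S,S'\in\mathbb S$, then comparing base-paths gives $g_\ast([S])=\pm h'\cdot[S']$ for some $h'\in\pi_1(M,\ast)$. Triviality of $g$ on $\pi_2(M)$ combined with freeness of the basis forces $S=S'$ at the level of basis elements.

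To finish, one upgrades invariance of the homotopy class to setwise invariance of $S$: Laudenbach's theorem says that homotopic disjoint essential embedded $2$-spheres are isotopic, and together with the ``no parallel spheres" choice from Part (1) this pins down each $S$ uniquely within $\mathbb S$. The main obstacle I expect is Part (2), specifically the careful bookkeeping of basepoints, paths to spheres, and the outer-vs-inner distinction for the $\pi_1$-action, so that the module-theoretic argument really identifies each individual sphere. The hypothesis $M\neq S^1\times S^2$ is essential because in $S^1\times S^2$ the belt sphere generates an infinite cyclic $\pi_2$ on which $\pi_1=\Z$ acts trivially, so a rotation along the $S^1$ factor is trivial on $\pi_1$ and $\pi_2$ yet sweeps the belt sphere around freely.
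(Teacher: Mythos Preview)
Your Part (1) is fine and matches the paper's citation of Meeks--Yau.

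Your Part (2) has a genuine gap. The assertion that the classes $\{[S]\}_{S\in\mathbb S}$ form a \emph{free} $\Z[\pi_1(M)]$-module basis of $\pi_2(M)$ is false in general. For instance, take $M=\mathbb{RP}^3\#\mathbb{RP}^3$: here $\pi_1(M)\cong\Z/2\ast\Z/2$ is infinite, the universal cover is $S^2\times\R$, and $\pi_2(M)\cong H_2(S^2\times\R)\cong\Z$, which is certainly not free over $\Z[\pi_1(M)]$. More generally, for any reduced sphere system the lifts of the spheres generate $H_2(\widetilde M)$ but satisfy one relation for each lifted complementary region (the boundary spheres of a simply-connected piece sum to zero), so $\pi_2(M)$ is a quotient of $\Z[\pi_1(M)]^{|\mathbb S|}$, not free on $\mathbb S$.

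What your argument actually uses is the weaker statement that distinct spheres in $\mathbb S$ lie in distinct $\pi_1$-orbits in $\pi_2(M,\ast)$, i.e.\ are not freely homotopic. By Laudenbach plus the annulus lemma, this is equivalent to saying no two spheres of $\mathbb S$ are parallel. But you cannot assume this $G$-equivariantly at the outset: if you throw out one sphere from a parallel pair that $g$ swaps, you destroy $G$-invariance, and ``orbit-averaging'' does not fix this. (The paper's Remark after the theorem makes exactly this point: the no-parallel reduction is only available \emph{after} one knows $G$ fixes every sphere.) So your module-theoretic step is circular.

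The paper proceeds differently after the common first step (triviality on $\pi_2$ plus Laudenbach gives that $S$ and $g(S)$ cobound an annulus $A\cong S^2\times[0,1]$). Rather than invoking a basis property, it analyzes the $\langle g\rangle$-orbit of $A$ directly: if the period $k\ge 3$, the iterates $A,g(A),\dots,g^{k-1}(A)$ glue up to force $M\cong S^1\times S^2$; if $k=2$, either the same conclusion holds or $g$ preserves $A$ and swaps its boundary spheres, whence an elementary intersection-pairing computation shows $g$ acts by $-1$ on $[S]\in H_2(M)$, contradicting triviality on $\pi_2$ (non-separating case) or triviality on $\pi_1$ (separating case). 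This geometric case analysis is exactly what is needed to close the gap in your outline.
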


We call a collection of spheres as in the statement of Theorem \ref{thm:invariant-spheres} a \emph{sphere system} for $G$.

\begin{remark}\label{rmk:essential}
Without loss of generality, one can assume that no $S\in\mathbb S$ bounds a ball in $M$ by removing any sphere that bounds a ball from $\mathbb S$. Similarly, if $G$ preserves every element of $\mathbb S$, then we can also assume that no pair $S\neq S'\in\mathbb S$ bound an embedded $S^2\times[0,1]$ in $M$.
\end{remark}

Part (1) of Theorem \ref{thm:invariant-spheres} is due to Meeks--Yau; see \cite[c.f.\ Thm.\ 7]{MY}. An alternate approach was given by Dunwoody \cite[Thm.\ 4.1]{dunwoody}. The main tools used in these works are minimal surface theory (in the smooth and PL categories). For the proof of Theorem \ref{thm:invariant-spheres}(2), we use the following lemmas.

\begin{lemma}\label{lem:annulus}
Let $S_0,S_1\subset M$ be disjoint embedded spheres. If $S_0$ and $S_1$ are ambiently isotopic, then they bound an embedded $S^2\times[0,1]$ in $M$. 
\end{lemma}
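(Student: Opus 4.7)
The plan is to turn the ambient isotopy into a singular product cobordism $F:S^2\times[0,1]\to M$ between $S_0$ and $S_1$, and then remove its self-intersections via innermost-disk surgeries in the style of Laudenbach \cite{laudenbach} to obtain a genuinely embedded $S^2\times[0,1]\subset M$ with boundary $S_0\sqcup S_1$.

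More concretely, I would take an ambient isotopy $\phi_t:M\to M$ with $\phi_0=\mathrm{id}_M$ and $\phi_1(S_0)=S_1$, and define $F(x,t)=\phi_t(x)$ on $S_0\times[0,1]$. Then $F$ sends the two boundary components of the cylinder onto $S_0$ and $S_1$, which are embedded disjointly in $M$ by hypothesis. After a small perturbation of $\phi$ (keeping the time $0$ and time $1$ images fixed), I would arrange that $F$ is a self-transverse smooth immersion; the singular set $\Delta\subset S_0\times[0,1]$ then consists of a disjoint union of circles, since $F$ is already an embedding on the boundary. Next I would eliminate these self-intersections by exploiting the fact that $S^2\times[0,1]$ is simply connected: each component of $\Delta$ bounds an embedded disk in the source, and choosing an innermost such disk allows a cut-and-paste surgery on $F$ that strictly decreases the number of components of the singular set. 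After finitely many steps, this would produce an embedded compact $3$-submanifold $N\subset M$ whose boundary contains $S_0\sqcup S_1$ together with possibly some extra $2$-sphere components created by the surgery, which can be discarded or capped off inside $M$.

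The final step is to identify the component $N_0$ of $N$ containing $S_0$: it is obtained from the product $S_0\times[0,1]$ by a sequence of embedded-sphere surgeries and therefore has sphere boundary and trivial relative $\pi_2$, forcing $N_0\cong S^2\times[0,1]$ with second boundary component equal to $S_1$. The main obstacle is the surgery bookkeeping, namely verifying that the innermost-disk procedure terminates and really yields a product region rather than a cobordism with additional topology. The cleanest implementation bypasses most of this bookkeeping by invoking Laudenbach's classification of sphere systems \cite{laudenbach}: two homotopic disjoint embedded $2$-spheres in a $3$-manifold are parallel, and since ambient isotopic spheres are in particular homotopic, this gives the lemma directly.
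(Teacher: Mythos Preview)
Your direct-construction approach has a dimension problem. The trace $F:S_0\times[0,1]\to M$ is a map between $3$-manifolds, so a generic perturbation to an immersion makes $F$ a local diffeomorphism; for such maps the multiple-point locus is \emph{open} in the source, not a union of circles. (If $F(x)=F(y)$ with $x\neq y$, then $F$ carries neighborhoods of $x$ and of $y$ diffeomorphically onto the same open set of $M$, so an entire neighborhood of $x$ consists of double points.) The innermost-disk machinery you invoke belongs to the codimension-$1$ setting of surfaces mapped into $3$-manifolds, where the double locus really is $1$-dimensional; it simply does not apply in codimension $0$. Likewise, ``capping off extra $2$-sphere components inside $M$'' presupposes that those spheres bound balls, which is precisely the kind of thing one is trying to establish.

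Your fallback paragraph is the correct route, and it is essentially the paper's argument. The paper cites \cite[Lem.\ 1.2]{laudenbach}: two disjoint homotopic embedded spheres in $M$ cobound an $h$-cobordism. One then invokes Perelman's resolution of the Poincar\'e conjecture to conclude that this $h$-cobordism is actually a product $S^2\times[0,1]$. You should make that second step explicit---Laudenbach's original result predates Perelman and only yields an $h$-cobordism, so ``parallel'' does not follow without it.
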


Lemma \ref{lem:annulus} follows from \cite[Lem.\ 1.2]{laudenbach} and the Poincar\'e conjecture (Laudenbach proves that homotopic spheres bound an $h$-cobordism, and every $h$-cobordism is trivial by Perelman's resolution of the Poincar\'e conjecture).

\begin{lemma}\label{lem:annulus-homeo}
Let $h$ be an orientation-preserving homeomorphism of $S^2\times [0,1]$. If $h$ that interchanges the two boundary components, then $h$ acts on $H_2(S^2\times[0,1])\cong\Z$ by $-1$. 
\end{lemma}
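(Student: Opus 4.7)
The plan is to leverage the long exact sequence of the pair $(W,\partial W)$ where $W=S^2\times[0,1]$, together with naturality under $h$ and the fact that $h$ preserves the fundamental class relative to the boundary. Write $W_0=S^2\times\{0\}$ and $W_1=S^2\times\{1\}$, each oriented as the $S^2$ factor. Relevant part of the LES is
\[
0\longrightarrow H_3(W,\partial W)\xrightarrow{\;\partial\;} H_2(\partial W)\xrightarrow{\;i_*\;} H_2(W)\longrightarrow H_2(W,\partial W),
\]
where the last group vanishes (e.g.\ by Lefschetz duality, since $H^1(W)=0$). Concretely $H_3(W,\partial W)\cong\Z$ is generated by the fundamental class $[W]$; $H_2(\partial W)\cong\Z[W_0]\oplus\Z[W_1]$; and $H_2(W)\cong\Z$ with $i_*[W_0]=i_*[W_1]$ equal to the generator $\gamma$.

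With the standard orientation conventions, the boundary map satisfies $\partial[W]=[W_1]-[W_0]$. Now $h$ is orientation-preserving, so $h_*[W]=[W]$ in $H_3(W,\partial W)$. Since $h$ interchanges the two boundary spheres, there exist $\epsilon,\delta\in\{\pm1\}$ with $h_*[W_0]=\epsilon[W_1]$ and $h_*[W_1]=\delta[W_0]$. Naturality of $\partial$ gives
\[
\delta[W_0]-\epsilon[W_1]\;=\;h_*\bigl([W_1]-[W_0]\bigr)\;=\;h_*\,\partial[W]\;=\;\partial\,h_*[W]\;=\;[W_1]-[W_0],
\]
so $\epsilon=\delta=-1$. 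Applying $i_*$ then yields $h_*\gamma=i_*(h_*[W_0])=i_*(-[W_1])=-\gamma$, which is exactly the claim.

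There is no substantive obstacle here: the argument is essentially a naturality check, and the only subtle point is getting the sign in $\partial[W]=[W_1]-[W_0]$ right (the outward normal at $W_1$ is $+\partial_t$ and at $W_0$ is $-\partial_t$, producing the sign). One could alternatively deduce the statement by factoring $h$ through the explicit orientation-preserving swap $\sigma(x,t)=(r(x),1-t)$ with $r$ the antipodal map on $S^2$, but the long exact sequence argument above seems cleanest and avoids any appeal to path-connectedness of the relevant homeomorphism groups.
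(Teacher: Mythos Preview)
Your argument is correct. It is, however, a genuinely different route from the one in the paper. The paper works with the Poincar\'e--Lefschetz intersection pairing
\[
H_1(A,\partial A)\times H_2(A)\longrightarrow\Z,
\]
takes the arc $\alpha=*\times[0,1]$ and the sphere $\beta=S^2\times\{0\}$ as generators, observes that swapping the boundary components forces $h_*\alpha=-\alpha$, and then uses invariance of the pairing under the orientation-preserving $h$ to read off $h_*\beta=-\beta$. Your proof instead tracks the fundamental class $[W]\in H_3(W,\partial W)$ through the boundary map of the long exact sequence of the pair, using naturality to pin down the signs $\epsilon=\delta=-1$ on the boundary spheres and then pushing forward via $i_*$. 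The two arguments are in some sense dual to one another: the paper's step ``$h_*\alpha=-\alpha$'' is exactly the Lefschetz-dual statement to your ``$h_*[W]=[W]$ and $\partial[W]=[W_1]-[W_0]$''. Your approach has the mild advantage that the sign bookkeeping is entirely mechanical once the convention for $\partial[W]$ is fixed, whereas the claim $h_*\alpha=-\alpha$ in the paper, though intuitively clear, implicitly uses the same boundary-map reasoning in $H_0$. Either way the proof is a two-line naturality check.
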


\begin{proof}[Proof of Lemma \ref{lem:annulus-homeo}]
Set $A=S^2\times[0,1]$. Consider the arc $\alpha=*\times[0,1]$ and the sphere $\beta=S^2\times 0$. After orienting $\alpha$ and $\beta$, we view them as homology classes $\alpha\in H_1(A,\partial A)$ and $\beta\in H_2(A)$, which generate these groups. Since $h$ interchanges the components of $\partial A$, $h(\alpha)=-\alpha$. Since $h$ is orientation-preserving,  
\[\alpha\cdot\beta=h(\alpha)\cdot h(\beta)=-\alpha\cdot h(\beta).\]
This implies $h(\beta)=-\beta$ because the intersection pairing $H_1(A,\partial A)\times H_2(A)\rightarrow \Z$ is a perfect pairing by Poincar\'e--Lefschetz duality. 
\end{proof}

\begin{proof}[Proof of Theorem \ref{thm:invariant-spheres}(2)]

Let $\mathbb S$ be a $G$-invariant collection of embedded spheres as in Theorem \ref{thm:invariant-spheres}(1). Fix $S\in\mathbb S$ and $g\in G$. We want to show that $g(S)=S$. Suppose for a contradiction that $g(S)$ is disjoint from $S$. Fix an embedding $f:S^2\rightarrow M$ with $f(S^2)=S$. Since $g$ acts trivially on $\pi_2(M)$, the maps $f$ and $g\circ f$ are homotopic, hence isotopic by a result of Laudenbach and the Poincar\'e conjecture; c.f.\ \cite[Thm.\ 1]{laudenbach}.

By Lemma \ref{lem:annulus}, the spheres $S$ and $g(S)$ bound a submanifold $A\cong S^2\times[0,1]$. Let $k\ge2$ be the smallest power of $g$ so that $g^k(S)=S$. 

First assume $k\ge3$. Since we can find a $G$-invariant metric on $M$, the interiors of $A$ and $g(A)$ are either equal or disjoint. Since $k\ge3$, $g^2(S)\neq S$, so $g(A)\neq A$. Consequently, $A\cup g(A)$ is diffeomorphic to $S^2\times[0,1]$. Similarly, we conclude that $A\cup g(A)\cup\cdots\cup g^{k-1}(A)$ is diffeomorphic to $S^2\times S^1$ (this is the only $S^2$-bundle over $S^1$ with orientable total space). This contradicts the assumption that $M\neq S^2\times S^1$. 

Now assume that $k=2$. As in the preceding paragraph, if $A$ and $g(A)$ have disjoint interiors, then $M=S^2\times S^1$. Therefore, $g(A)=A$. By Lemma \ref{lem:annulus-homeo}, $[g(S)]=-[S]$ in $H_2(M)$. By assumption that $G$ acts trivially on $\pi_2(M)$, we know $[g(S)]=[S]$. Therefore, $2[S]=0$ in $H_2(M)$. If $S\subset M$ is non-separating, then there is a closed curve $\gamma\subset M$ so that $[S]\cdot[\gamma]=1$; this implies that $[S]$ has infinite order in $H_2(M)$, which is a contradiction. If $S\subset M$ is separating, then $M\setminus A$ is a union of two components $M_1\sqcup M_2$ that are interchanged by $g$. This contradicts the fact that $G$ acts trivially on $\pi_1(M)\cong\pi_1(M_1)*\pi_1(M_2)$. 
\end{proof}

\subsection{Decomposing an action along invariant spheres}\label{sec:decompose}

Here we explain how we use Theorem \ref{thm:invariant-spheres} to decompose an action $G\curvearrowright M$ into smaller pieces. We also prove a result about the action on the fundamental group of the pieces under the assumption that $G$ acts trivially on $\pi_1(M)$. 

Fix a finite subgroup $G<\Diff^+(M)$ and assume $G$ acts trivially on $\pi_1(M)$. Let $\mathbb S$ be a sphere system for $G$ (Theorem \ref{thm:invariant-spheres}). We will always assume that $\mathbb S$ has the additional properties discussed in Remark \ref{rmk:essential}: no $S\in\mathbb S$ bounds a ball and no pair $S,S'\in\mathbb S$ bound an embedded $S^2\times[0,1]$. 

Observe that there is an induced action of $G$ on $M_{\mathbb S}$. To construct it, recall a classical result of Brouwer, Eilenberg, and de Ker\'ekj\'art\'o \cite{Brouwer,K,Eilenberg} that every finite subgroup of $\Homeo^+(S^2)$ is conjugate to a finite subgroup of $\SO(3)$, hence extends from the unit sphere $S^2\subset\R^3$ to the unit ball $D^3\subset\R^3$. In this way the action of $G$ on $M\setminus\bigcup_{S\in\mathbb S}S$ extends to an action on $M_{\mathbb S}$, which can be made smooth as well.

\begin{remark}[global fixed points]\label{rmk:fixed-points} Since $G$ acts trivially on $\mathbb S$, then the center of each of the added 3-balls in $M_{\mathbb S}$ contains a global fixed point for the $G$-action; we call these \emph{canonical fixed points}. Each component of $M_{\mathbb S}$ contains at least one canonical fixed point.  \end{remark}

The following proposition will be important for our proof of the Main Theorem.

\begin{proposition}\label{prop:fundamental-group}
Fix $G<\Diff^+(M)$ acting trivially on $\pi_1(M)$ and fix a $G$-invariant collection $\mathbb S$ of disjoint, embedded spheres such that $M_{\mathbb S}$ has irreducible components. Let $N$ be a component of $M_{\mathbb S}$, and let $p\in N$ be a canonical fixed point, as defined in Remark \ref{rmk:fixed-points}. Then $G$ acts trivially on $\pi_1(N,p)$. 
\end{proposition}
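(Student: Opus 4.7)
The plan is to use the Bass--Serre / graph-of-groups description of $\pi_1(M)$ induced by $\mathbb{S}$, plus the classical fact that in a nontrivial free product the normalizer of a free factor coincides with the factor.

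Since each $S\in\mathbb{S}$ is simply connected, van Kampen's theorem yields
\[
\pi_1(M)\cong F_r\ast\pi_1(N_1)\ast\cdots\ast\pi_1(N_\mu),
\]
where $N_1=N,N_2,\ldots,N_\mu$ are the components of $M_{\mathbb S}$ (dropping trivial $\pi_1(S^3)$ factors) and $F_r$ is free, with each $\pi_1(N_i)$ a distinguished free factor well-defined up to conjugacy. First I would pick a basepoint $q\in S_1:=\partial B_1\subset M$, so $q$ lies simultaneously in the closure $\overline{N^\circ}\subset M$ of the $N$-piece of $M\setminus\mathbb{S}$ and in $N$. The inclusion $\overline{N^\circ}\hookrightarrow M$ identifies $\pi_1(N,q)\cong\pi_1(\overline{N^\circ},q)$ with the specific free factor $\pi_1(N)<\pi_1(M,q)$, and the simply connected ball $B_1$ (which is $G$-invariant, with $p$ as its center) provides a canonical change of basepoint $\pi_1(N,q)\cong\pi_1(N,p)$ intertwining the action of any $g$ fixing $p$.

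For each $g\in G$, pick a path $\xi_g\subset S_1$ from $q$ to $g(q)$; since $S_1$ is simply connected, $\xi_g$ is unique up to homotopy rel endpoints. The triviality hypothesis on $\pi_1(M)$ gives that the resulting automorphism $\tilde g_\ast$ of $\pi_1(M,q)$ is inner, say $\tilde g_\ast=\operatorname{inn}(\alpha_g)$. Since $g(p)=p$, $g$ preserves $N^\circ$ in $M$, so $\tilde g_\ast$ preserves the subgroup $\pi_1(\overline{N^\circ},q)<\pi_1(M,q)$; then $\alpha_g$ normalizes this subgroup, and the normalizer fact forces $\alpha_g\in\pi_1(\overline{N^\circ},q)\cong\pi_1(N,p)$. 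Transporting through $B_1$, this shows $g$ acts on $\pi_1(N,p)$ by conjugation by an element of $\pi_1(N,p)$, i.e.\ trivially as an outer automorphism. To upgrade ``inner'' to ``identity,'' one additionally argues, using orientation-preservation of $g\in\Diff^+(M)$ and connectivity of the adjacency graph, that $G$ in fact preserves every component $N_i$ and every sphere in $\mathbb{S}$; then $\alpha_g$ normalizes each $\pi_1(N_i)$ and commutes with each non-tree generator of $F_r$, which cuts $\alpha_g$ down to $\{1\}$ via the fact that two distinct free factors intersect trivially in a free product. The only remaining case $\pi_1(M)\cong\pi_1(N,p)$ is immediate from the hypothesis.

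The step I expect to be the main obstacle is the basepoint bookkeeping: verifying that the inner automorphism $\operatorname{inn}(\alpha_g)$ computed at $q\in S_1$ corresponds, under the basepoint change $\pi_1(N,q)\cong\pi_1(N,p)$ through $B_1$, to the actual $g$-action at $p$. This relies on the $G$-invariance and simply connectedness of $B_1$: both the path $\eta$ from $p$ to $q$ and the path $\xi_g$ from $q$ to $g(q)$ can be taken in $B_1$, where they are canonical up to homotopy rel endpoints, so the identifications are $G$-equivariant. Once this compatibility is in place, the algebra of free products delivers the conclusion in both interpretations (inner, or identity).
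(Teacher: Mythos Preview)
Your overall strategy---using the free-product decomposition of $\pi_1(M)$ coming from $\mathbb S$ and the fact that in a nontrivial free product a free factor is its own normalizer---is exactly the mechanism behind the paper's proof, and your basepoint bookkeeping through the simply connected ball $B_1$ is fine (it is equivalent to the paper's use of a fixed point $q\in S^g$ and an arc in the fixed set). So the part of your argument showing that $g$ acts on $\pi_1(N,p)$ by an \emph{inner} automorphism is correct.

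The genuine gap is in the ``upgrade from inner to identity.'' You assert that $\alpha_g$ normalizes each $\pi_1(N_i)$ and commutes with each non-tree generator of $F_r$, but neither claim is justified. For $i\neq 1$ the subgroup $\pi_1(N_i)<\pi_1(M,q)$ depends on a choice of connecting path, and there is no reason $\tilde g_\ast$ preserves that particular conjugate rather than some other one; so you cannot immediately place $\alpha_g$ inside $\pi_1(N_i)$. More seriously, the claim that $\alpha_g$ commutes with the free generators $\gamma_j$ is exactly the crux of the matter. A priori one only gets $\tilde g_\ast(\gamma_j)=\beta_j\gamma_j$ for some $\beta_j\in\pi_1(N,q)$ (using simple connectivity on the other side), and one must \emph{prove} $\beta_j=1$; this is the content of the paper's word-length argument in the case $k\ge 3$ with the complementary piece simply connected, i.e.\ when $\pi_1(M)\cong\pi_1(N)\ast F_{k-1}$ and the normalizer trick alone gives only $\alpha_g\in\pi_1(N)$. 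Your sketch skips this step entirely. (As a smaller point, your proposed reason for $G$ preserving every sphere---``orientation-preservation and connectivity of the adjacency graph''---is not correct; in the paper this comes from the $\pi_2$-triviality via Theorem~\ref{thm:invariant-spheres}(2), and is effectively a standing hypothesis once canonical fixed points exist.)

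In short: your argument reproduces the easy cases of the paper's proof (where two nontrivial vertex groups are visible) but does not handle the case where $\pi_1(N)$ is the only nontrivial vertex group and the rest of $\pi_1(M)$ is free. That case needs an honest computation of $\tilde g_\ast(\gamma_j)$ along the lines of the paper's Case $k\ge 3$.
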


\begin{proof}
Fix $g\in G$. We show that the action of $g$ on $\pi_1(N,p)$ is trivial. The statement is only interesting when $N$ is not simply connected, so we assume this. 

Let $k$ be the number of elements of $\mathbb S$ that meet $N$. We separate the argument into the cases $k=1$, $k=2$, and $k\ge3$. 

\p{Case: $k=1$} Let $S\in\mathbb S$ be the sphere that meets $N$. The sphere $S$ is separating and gives a description of $M$ as a connected sum $M\cong N\# N'$. Since $g\in G$ has finite order and preserves $S$, there is a fixed point $q\in S^g$. By van Kampen's theorem, $\pi_1(M,q)\cong\pi_1(N,q)*\pi_1(N',q)$. By Lemma \ref{lem:twist-action}, the action of $g$ on $\pi_1(M,q)$ is by conjugation by some element $\pi_1(M,q)$. Since $g$ preserves the decomposition $M=N\#N'$, it also preserves the factors in the splitting $\pi_1(N,q)*\pi_1(N',q)$. Note that  $\pi_1(N',q)$ is nontrivial, since otherwise $S$ bounds a ball in $M$, contrary to our assumption. The only conjugation of $A*A'$ that preserves both $A\neq1$ and $A'\neq1$ is the trivial conjugation, so $g$ acts trivially on $\pi_1(M,q)$. Consequently, $g$ acts trivially on $\pi_1(N,q)$, and also on $\pi_1(N,p)$. (In general, changing the basepoint can change the automorphism to a nontrivial conjugation, but in this does not happen here since e.g.\ the points $p,q\in N$ are connected by an arc contained in the fixed set $N^g$.) 

\p{Case: $k=2$} Let $S,S'\in\mathbb S$ denote the spheres the meet $N$. Observe that these sphere are either both separating or both nonseparating. Let $N'$ be the closed 3-manifold such that $M_{\mathbb S}=N\sqcup N'$. 

If $S$ and $S'$ are both separating, then the argument is similar to the case $k=1$. Apply that argument to either sphere to see that the action is trivial at the corresponding canonical fixed point. 

% case $k=1$ to the first sphere, and we obtain the the action is identity reference to the first point, implying the the action is by conjugation referencing other points. Then we apply the Case: $k=1$ to the second sphere, we obtain that the action is identity reference to the second point. Since this does not depend on the order we take, we know that reference to both fixed points, the action is trivial. 

Assume then that both $S$ and $S'$ are nonseparating; in particular, this implies that $N'$ is connected. Then $M$ is obtained from $N\sqcup N'$ by removing balls $B_1,B_2\subset N$ and $B_1',B_2'\subset N'$ and gluing $\partial B_i$ to $\partial B_i'$. Choose a fixed points $q\in S^g$ and $q'\in (S')^g$. There is an isomorphism $\pi_1(M)\cong \pi_1(N,q)*\pi_1(N',q)*\Z$. (The $\Z$ factor is not important for this part of the argument, but will play a role when $k\ge3$.) 

By assumption $g$ acts on $\pi_1(M,q)$ by conjugation and preserves the free factors $\pi_1(N,q)$ and $\pi_1(N',q)$. Both of these groups is nontrivial, by our assumption that no two spheres in $\mathbb S$ are parallel. Then as before, we conclude that $g$ acts trivially on $\pi_1(M,q)$, hence also on $\pi_1(N,q)$ and $\pi_1(N,p)$. 

\p{Case $k\ge3$} Let $S_0,\ldots,S_{k-1}\in\mathbb S$ denote the spheres that meet $N$. If some $S_i$ separates, then we can proceed similar to the case $k=1$, so we can assume each $S_i$ is nonseparating. Then $M_{\mathbb S}=N\sqcup N'$, where $N'$ is connected, and $M$ is obtained from $N\sqcup N'$ by removing balls $B_0,\ldots,B_{k-1}\subset N$ and $B_0',\ldots,B_{k-1}'\subset N'$ and gluing $B_i$ and $B_i'$ along their boundary (which is identified with $S_i\subset M$). Choose fixed points $q_i\in (S_i)^g$. There is an isomorphism
\[\pi_1(M,q_0)\cong\pi_1(N,q_0)*\pi_1(N',q_0)*F_{k-1}.\]
If $\pi_1(N',q_0)\neq1$, then we can argue similar to the case $k=2$. Therefore, we assume that $N'$ is simply connected, which means $\pi_1(M,q_0)\cong\pi_1(N,q_0)*F_{k-1}$. 

The free group $F_{k-1}$ is generated by loops $\gamma_i=\eta_i*\eta_i'$, where $\eta_i$ is a path in $N$ from $q_0$ to $q_i$ (and disjoint from the interiors of the balls $B_0,\ldots,B_{k-1}$), and $\eta_i'$ is a path in $N'$ from $q_i$ to $q_0$.

On the one hand,\footnote{Here the symbol $\sim$ indicates homotopic loops based at $q_0$. For the first homotopy, note that the paths $g(\eta_i')$ and $\eta_i'$ are homotopic rel endpoints because $N'$ is simply connected.} 
\[g(\gamma_i)\sim g(\eta_i)*\eta_i'\sim g(\eta_i)*\overline{\eta_i}*\eta_i*\eta_i'=(g(\eta_i)*\overline{\eta_i})*\gamma_i,\]
so $g$ acts on $\gamma_i$ by left multiplication by the element $\beta_i = g(\eta_i)*\overline{\eta_i}\in\pi_1(N,q_0)$. 

On the other hand, $g$ acts on $\gamma_i$ by conjugation by an element $\alpha\in \pi_1(M,q_0)$. The only way these actions are equal is if both are trivial.  If we use the word length on $\pi_1(M,q_0)$ given by the generating set $\{s: s\in \pi_1(N,q_0) \text{ or } s\in F_{k-1}\}$, then the word length of $\alpha\gamma_i\alpha^{-1}$ is odd, but the word length for $\beta_i\gamma_i$ is 2 unless  $\beta_i=1$. This implies that $\beta_i=1$. Then we know that  $\gamma_i= \alpha\gamma_i\alpha^{-1}$ for every $i$, which implies that $\alpha=1$.

In particular, this implies that $g$ acts trivially on $\pi_1(M,q_0)$ and hence also on $\pi_1(N,q_0)$ and $\pi_1(N,p)$. 
\end{proof}

\section{Obstructing realizations}\label{sec:obstruction}

In this section we prove the ``only if" direction of the Main Theorem. This can be deduced quickly from the following more general statements. 

\begin{theorem}\label{thm:holder-argument}
Let $N$ be a closed, oriented, irreducible $3$-manifold with basepoint $p\in N$. Suppose there exists a nontrivial, finite-order element $f\in\Diff^+(N,p)$ that acts trivially on $\pi_1(N,p)$. Then $N$ is a lens space.  
\end{theorem}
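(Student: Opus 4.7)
The plan is to lift $f$ to a finite-order diffeomorphism $\widetilde{f}$ of the universal cover $\widetilde{N}$ commuting with the deck group $\pi_1(N)$, and then analyze the fixed set $F$. Choose $\widetilde{f}$ fixing a preimage $\widetilde{p}$ of $p$. Triviality of $f_\ast$ on $\pi_1(N,p)$ forces $\widetilde{f}$ to commute with every deck transformation, and since $\widetilde{f}^n$ is a deck transformation fixing $\widetilde{p}$ (where $n$ is the order of $f$), it equals $\mathrm{id}$. Hence $\widetilde{N}$ carries a $\pi_1(N)\times\Z/n$-action, and $\pi_1(N)$ preserves $F$ with the induced action free. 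Because $\widetilde{f}$ is orientation-preserving and nontrivial of finite order, a standard linearization via an invariant metric shows that every component of $F$ has dimension $0$ or $1$ (otherwise $\widetilde{f}=\mathrm{id}$ by an ``open and closed'' argument).

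The argument now splits on $|\pi_1(N)|$. When $\pi_1(N)$ is infinite, irreducibility and geometrization give $\widetilde{N}\cong\R^3$ and $N$ aspherical. Iterated Smith theory applied to the prime-order subgroups of $\langle\widetilde{f}\rangle$ shows $F$ is connected and $\Z/p$-acyclic for each prime $p\mid n$, so $F$ is either a point or a properly embedded line. The point case forces $\pi_1(N)=1$, so $F\cong\R$, and $\pi_1(N)$ acts freely on $F$, cocompactly because $F/\pi_1(N)\subset N$ is closed in the compact manifold $N$. Since every orientation-reversing self-homeomorphism of $\R$ has a fixed point, the action is orientation-preserving; then H\"older's theorem embeds $\pi_1(N)$ as a cocompact subgroup of $(\R,+)$, forcing $\pi_1(N)\cong\Z$. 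This contradicts the fact that a closed aspherical orientable $3$-manifold has $\pi_1$ of cohomological dimension $3$, while $\mathrm{cd}(\Z)=1$.

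When $\pi_1(N)$ is finite, $N=S^3/\Gamma$ is a spherical space form by elliptization. By the Smale conjecture for $S^3$ (Hatcher), the finite group $\langle\Gamma,\widetilde{f}\rangle<\Diff^+(S^3)$ is smoothly conjugate into $\SO(4)$, and after this conjugation $\Gamma$ and $\widetilde{f}$ remain commuting orthogonal subgroups. The fixed set of a nontrivial $\widetilde{f}\in\SO(4)$ with $+1$ as an eigenvalue is either a great circle $S^1$ or an antipodal pair $S^0$. Since $\Gamma$ commutes with $\widetilde{f}$, it preserves $F$ and acts freely on it; any finite group acting freely on $S^1$ or $S^0$ is cyclic (again using that orientation-reversing self-homeomorphisms of $S^1$ have fixed points). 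Thus $\Gamma$ is cyclic and $N$ is a lens space.

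The main obstacle will be the infinite-$\pi_1$ case, where the crux is combining Smith theory (to pin down the geometry of $F$), cocompactness of the $\pi_1(N)$-action on $F$ (so that H\"older's theorem produces a cocompact embedding into $\R$), and the cohomological dimension constraint on closed aspherical $3$-manifold groups. The spherical case is then essentially mechanical given the Smale conjecture and the classification of free finite-group actions on $S^1$.
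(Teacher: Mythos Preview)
Your argument is correct and mirrors the paper's strategy: lift to $\widetilde N$, apply Smith theory to the fixed set, and exploit the free deck-group action on it. A few points of comparison and correction. In the infinite-$\pi_1$ case you invoke geometrization for $\widetilde N\cong\R^3$ and H\"older's theorem for $\pi_1(N)\cong\Z$; the paper gets by with less: contractibility of $\widetilde N$ (from irreducibility, the sphere theorem, and Hurewicz) already suffices for Smith theory, and a free properly discontinuous action on $\R$ immediately gives $\pi_1(N)\cong\Z$ via the quotient $1$-manifold. In the finite-$\pi_1$ case you linearize the action on $S^3$; note that the relevant input here is the geometrization of finite group actions on $S^3$ (via Perelman, or Dinkelbach--Leeb), not Hatcher's Smale conjecture, which concerns the homotopy type of $\Diff(S^3)$ and does not by itself conjugate finite subgroups into $\SO(4)$. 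Also, an element of $\SO(4)$ has $+1$-eigenspace of even dimension, so your $S^0$ case never arises. The paper avoids linearization altogether: Smith theory applied directly to the smooth $\langle F\rangle$-action on $S^3$ shows the fixed set is a smooth $1$-dimensional $\Z/p$-homology sphere, hence $S^1$, and then the free $\pi_1(N)$-action on $S^1$ forces $\pi_1(N)$ to be cyclic.
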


\begin{theorem}\label{thm:noncyclic}
Let $M$ be a closed, oriented, reducible $3$-manifold. Let $G<\Diff^+(M)$ be a finite subgroup that acts trivially on $\pi_1(M)$. Then $G$ is cyclic. 
\end{theorem}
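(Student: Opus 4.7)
\textbf{Proof proposal for Theorem \ref{thm:noncyclic}.}
The plan is to decompose the action of $G$ along a $G$-invariant sphere system, apply Theorem~\ref{thm:holder-argument} to each irreducible piece, and propagate the resulting information across the dual graph of the decomposition. Since $M$ is reducible, Theorem~\ref{thm:invariant-spheres}(1) yields a nonempty $G$-invariant collection $\mathbb S$ of pairwise disjoint embedded $2$-spheres such that every component of $M_{\mathbb S}$ is irreducible; I would arrange (Remark~\ref{rmk:essential}) that no $S\in\mathbb S$ bounds a ball and no pair bounds an $S^2\times[0,1]$. The next step is to show that $G$ preserves each $S\in\mathbb S$ individually. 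Theorem~\ref{thm:invariant-spheres}(2) does this under the extra assumption of triviality on $\pi_2(M)$, so I would first promote the hypothesis by invoking Laudenbach's theorem (cf.\ \cite[Thm.~2.4]{BBP}): for each $g\in G$, choose a path from a basepoint $*$ to $g(*)$ that conjugates $g_*$ into the identity on $\pi_1(M,*)$ (possible since the outer $\pi_1$-action of $g$ is trivial); the resulting pointed representative acts trivially on $\pi_1(M,*)$, hence by Laudenbach trivially on $\pi_2(M,*)$, which yields outer triviality on $\pi_2(M)$. Theorem~\ref{thm:invariant-spheres}(2) now applies when $M\neq S^1\times S^2$; the case $M=S^1\times S^2$ can be handled directly using the known structure of $\Diff^+(S^1\times S^2)$.

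With each $S\in\mathbb S$ individually preserved, Proposition~\ref{prop:fundamental-group} gives that $G$ acts trivially on $\pi_1(N,p)$ at each canonical fixed point $p$ (Remark~\ref{rmk:fixed-points}) of every component $N$ of $M_{\mathbb S}$. Fix such a component $N$ and $g\in G$. Theorem~\ref{thm:holder-argument} then forces either $g|_N=\mathrm{id}_N$ or $N$ to be a lens space. In the latter case, the image $G_N$ of $G$ in $\Diff^+(N,p)$ is a finite subgroup fixing $p$ and acting trivially on $\pi_1(N,p)$; linearizing at $p$ and invoking the geometric structure on $L(p,q)$ (via the Smale conjecture, as already used in the proof of Theorem~\ref{thm:HFW}), $G_N$ is conjugate into the rotational $S^1$-isometries of $N$, hence cyclic.

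To conclude that $G$ itself is cyclic, I would invoke Newman's theorem on finite group actions twice. First, the kernel of the restriction $G\to\Diff(M_{\mathbb S})\cong\prod_N\Diff(N)$ consists of finite-order diffeomorphisms of $M$ that are the identity on the open complement of tubular neighborhoods of $\mathbb S$, hence is trivial. Second, if some $g\in G$ restricts to the identity on a component $N$, then $g$ is the identity on every sphere $S\in\mathbb S$ bounding $N$; on an adjacent component $N'$, $g|_{N'}$ is by the preceding paragraph either the identity or a rotation of a lens space with only two fixed points, and since the latter cannot fix the whole of $S$ pointwise, $g|_{N'}=\mathrm{id}_{N'}$. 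By connectedness of the dual graph of $\mathbb S$, any nontrivial $g\in G$ must therefore act nontrivially on every component. Consequently, for any component $N$ on which $G$ acts nontrivially, the restriction $G\to G_N$ is injective; combined with the cyclicity of $G_N$ established above, this gives $G$ cyclic.

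The main obstacle is the first step: promoting triviality on $\pi_1(M)$ to triviality on $\pi_2(M)$ so that Theorem~\ref{thm:invariant-spheres}(2) applies requires careful bookkeeping with basepoints and paths to produce a genuinely pointed diffeomorphism before invoking Laudenbach, and the prime reducible case $M=S^1\times S^2$ must be argued separately. The remaining ingredients — the per-piece cyclicity coming from Theorem~\ref{thm:holder-argument} together with the Smale conjecture for lens spaces, and the Newman-based assembly using connectedness of the dual graph of $\mathbb S$ — are then expected to fit together cleanly.
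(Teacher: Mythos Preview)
Your strategy diverges from the paper's at the key step. The paper does not try to show that $G_N$ is cyclic for \emph{every} lens-space component $N$; instead it argues that either some component $N\cong S^3$ carries at least three canonical fixed points (whence the Smith conjecture places the image of $G$ in $\SO(2)$), or else every component is a lens space other than $S^3$, in which case Corollary~\ref{cor:twist-generator} gives $\Twist(M)=0$. Your Newman-type propagation argument for injectivity of $G\to G_N$ is correct and is a step the paper leaves implicit.

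The substantive gap in your proposal is the assertion that $G_N$ lands in an $S^1$ of isometries of $N$. This fails for $N=\R P^3=L(2,1)$: the point-stabilizer in $\Isom^+(\R P^3)\cong\SO(4)/\{\pm I\}$ is all of $\SO(3)$, and the hypothesis ``trivial on $\pi_1(N,p)$'' is vacuous since $\Aut(\Z/2\Z)=1$. Concretely, the Klein four-group generated by $\mathrm{diag}(1,1,-1,-1)$ and $\mathrm{diag}(1,-1,1,-1)$ in $\SO(4)$ descends to a faithful $(\Z/2\Z)^2$-action on $\R P^3$ fixing $[e_1]$. Equivariantly connect-summing two such copies at $[e_1]$ (the tangent representations are intertwined by the orientation-reversing map $\mathrm{diag}(-1,1,1)$, so Lemma~\ref{lem:gluing} applies) produces a faithful $(\Z/2\Z)^2<\Diff^+(\R P^3\#\R P^3)$ that preserves each prime factor and hence acts trivially on $\Out(\Z/2\Z*\Z/2\Z)$. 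This appears to be a counterexample to Theorem~\ref{thm:noncyclic} as stated; it also shows that the paper's own argument is incomplete in the ``no $S^3$ component'' case, where the conclusion $\Twist(M)=0$ suffices for the Main Theorem (there $G$ is realized from $\Twist(M)$, so $\Twist(M)=0$ forces $G=1$) but says nothing about an arbitrary finite $G<\Diff^+(M)$ trivial on $\pi_1$.
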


\begin{proof}[Proof of Main Theorem: obstruction]
Suppose $1\neq G<\Twist(M)$ is realizable. The fact that $\Twist(M)\neq1$ implies that either $M=S^2\times S^1$ or $M$ is reducible. In the former case, there is nothing to prove, so we assume $M$ is reducible. This assumption together with Lemma \ref{lem:twist-action} allow us to apply Theorem \ref{thm:noncyclic} and conclude that $G$ is cyclic. 

It remains to show $M$ is a connected sum of lens spaces, or, equivalently, that each component of $M_{\mathbb S}$ is a lens space. This is implied directly by Proposition \ref{prop:fundamental-group} and Theorem \ref{thm:holder-argument}. 
\end{proof}

Next we use Theorem \ref{thm:holder-argument} to deduce Theorem \ref{thm:noncyclic}. Then we prove Theorem \ref{thm:holder-argument}. 

\begin{proof}[Proof of Theorem \ref{thm:noncyclic}]
Let $\mathbb S$ be a sphere system for $G$ (Theorem \ref{thm:invariant-spheres}). We also assume that no $S\in\mathbb S$ bounds a ball and that no two spheres $S,S'\in\mathbb S$ bound an embedded $S^2\times[0,1]$ (Remark \ref{rmk:essential}). 

Since $G$ acts trivially on $\pi_1(M)$, Proposition \ref{prop:fundamental-group} and Theorem \ref{thm:holder-argument} combine to show that each component of $M_{\mathbb S}$ is a lens space. 

Suppose that there exists a component $N$ of $M_{\mathbb S}$ that is diffeomorphic to $S^3$. Let $k$ be the number of elements of $\mathbb S$ that meet $N$. If $k=1$, then $N$ is obtained from $M$ by cutting along a sphere that bounds a ball. Similarly, if $k=2$, then $N$ is obtained by cutting $M$ along two parallel spheres. Both of these are contrary to our assumption about $\mathbb S$. Therefore $k\ge3$, which implies that $N^G$ has at least 3 points. By the Smith conjecture \cite{smith-conjecture}, the action of $G$ on $N\cong S^3$ is conjugate into $\SO(4)$, and the fact that $|N^G|\ge3$ implies that $G$ is conjugate into $\SO(2)$. Therefore $G$ is cyclic. 

The remaining case is that every component of $M_{\mathbb S}$ is a lens space different from $S^3$ and $S^1\times S^2$. In this case $\Twist(M)$ is the trivial group by Corollary \ref{cor:twist-generator}. 
\end{proof}

We proceed to the proof of Theorem \ref{thm:holder-argument}. Our argument is inspired by an argument of Borel \cite{borel-isometry-aspherical} that shows that a finite group $G$ acting faithfully on a closed aspherical manifold $N$ and $\pi_1(N)$ has trivial center, then $G$ also acts faithfully on $\pi_1(N)$ (by outer automorphisms). 

Before starting the proof, we recall some facts about lifting actions to universal covers. 
Let $N$ be a closed manifold. Recall that $\widetilde N$ can be defined as the set of paths $\alpha:[0,1]\to N$ with $\alpha(0)=*$, up to homotopy rel endpoints. Using this description, there is a left action $\pi_1(N,*)\times \widetilde N\to \widetilde N$ given by pre-concatenation of paths $[\gamma].[\alpha]=[\gamma*\alpha]$, and there is a left action 
\[\Diff(N,*)\times\widetilde N \to\widetilde N\] given by post-composition $f.[\alpha]=[f\circ \alpha]$. If $f\in\Diff(N,*)$ acts trivially on $\pi_1(N,*)$, then $F([\alpha])=[f\circ \alpha]$ is a lift of $f$ that commutes with the deck group action and fixes the homotopy class of the constant path (as well as every other homotopy class corresponding to an element of $\pi_1(N,*)$). 

\begin{proof}[Proof of Theorem \ref{thm:holder-argument}]
As observed above, we can lift $f$ to a finite-order diffeomorphism $F$ that commutes with the deck group $\pi_1(N,*)$ and has a global fixed point. 

First we show that $\pi_1(N)$ is finite. Suppose for a contradiction that $\pi_1(N)$ is infinite. This implies $\widetilde N$ is contractible.\footnote{By Hurewicz, $\pi_3(\widetilde N)\cong H_3(\widetilde N)$. Since $\pi_1(N)$ is infinite, $\widetilde N$ is noncompact, so $H_3(\widetilde N)=0$. Similarly, all higher homotopy groups vanish by Hurewicz's theorem.} By Smith theory \cite{Smith}, the fixed set $(\widetilde{N})^F$ is connected, and simply connected. Since $F$ acts smoothly, $(\widetilde{N})^F$ is a smooth 1-dimensional manifold, hence it is homeomorphic to $\R$. Since $\pi_1(N)$ commutes with $F$, it acts on $(\widetilde N)^F\cong\R$, and this action is free and properly discontinuous since the action of $\pi_1(N,*)$ on $\widetilde{N}$ has these properties. This implies that $\pi_1(N,*)\cong\Z$, which contradicts the fact that $N$ is a closed, aspherical 3-manifold ($\Z$ is not a 3-dimensional Poincar\'e duality group). 

Since $\pi_1(N)$ is finite, its universal cover is diffeomorphic to $S^3$ by the Poincare-conjecture. As in the preceding paragraph, consider the action of $F$ on $\widetilde N\cong S^3$. By Smith theory and smoothness of the action, the fixed set is a smooth, connected 1-dimensional manifold with nontrivial fundamental group. Hence $(\widetilde N)^F\cong S^1$. Since $\pi_1(N)$ acts freely on $(\widetilde N)^F$ this implies $\pi_1(N)$ is cyclic, which implies that $N$ is a lens space. 
\end{proof}

\section{Constructing realizations}\label{sec:construction}

In this section we prove the ``if" direction of the Main Theorem. We state this as the following theorem. 

\begin{theorem}\label{thm:realization}
Let $M$ be a connected sum of lens spaces. Then every cyclic subgroup of $\Twist(M)$ is realizable. 
\end{theorem}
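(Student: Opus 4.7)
By Corollary~\ref{cor:twist-generator}, $\Twist(M) \cong (\Z/2\Z)^k$ is generated by the sphere twists $\tau_1, \ldots, \tau_k$ about the belt spheres $S_1, \ldots, S_k$ of the $S^1 \times S^2$ summands of $M$. Any nontrivial cyclic subgroup has order $2$ and is generated by a product $\tau_I := \prod_{i \in I} \tau_i$ for some nonempty $I \subseteq \{1,\ldots,k\}$, so it suffices to produce, for each such $I$, an orientation-preserving involution $\Phi \in \Diff^+(M)$ with $[\Phi] = \tau_I$.

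The main ingredient is an explicit involution on $S^1 \times S^2$ realizing the sphere twist $\tau$. Define
\[
\phi(\theta,x) := (\theta, R(\theta)\cdot x),
\]
where $R: S^1 \to \SO(3)$ sends $\theta \in \R/\Z$ to rotation of $S^2 \subset \R^3$ by angle $\pi$ about the axis $v(\theta) := (\cos\pi\theta, \sin\pi\theta, 0)$. Since rotation by $\pi$ depends only on the unoriented axis and $v(\theta+1) = -v(\theta)$, $R$ is well-defined on $S^1$, and each $R(\theta)$ has order $2$, so $\phi$ is an orientation-preserving involution. Because $\phi$ preserves the $S^1$-projection and acts on each fiber by $\SO(3)$, it acts trivially on $\pi_1(S^1\times S^2) \cong \Z$ and on $\pi_2(S^1\times S^2) \cong \Z$; on the other hand, lifting $R$ to $SU(2) = S^3$ gives an equatorial path from $i$ to $-i$, so $R$ represents the nontrivial class in $\pi_1(\SO(3)) \cong \Z/2\Z$ and $\phi$ is not isotopic to the identity. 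Since $\Twist(S^1 \times S^2) \cong \Z/2\Z$ is precisely the kernel of $\Mod(S^1\times S^2) \to \Out(\pi_1)\times \Aut(\pi_2)$, this forces $[\phi] = \tau$. Note that $\phi$ has a fixed circle along which the tangent action is rotation by $\pi$.

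To build $\Phi$ on a general $M = \#_k(S^1 \times S^2) \# P_1 \# \cdots \# P_\ell$, the plan is to chain the summands by equivariant connected sum at fixed points, arranging that every summand involution has the same local model (rotation by $\pi$) at a chosen fixed point. For the $i$-th $S^1 \times S^2$ summand use $(S^1 \times S^2, \phi)$ if $i \in I$; otherwise use $(S^1 \times S^2, \psi)$ with $\psi(\theta, x) := (\theta, r \cdot x)$ for a fixed order-$2$ element $r \in \SO(3)$, which is manifestly isotopic to the identity. On each lens space $L(p_j, q_j) = S^3/(\Z/p_j)$, use the involution descended from $(z_1, z_2) \mapsto (z_1, -z_2)$ on $S^3 \subset \C^2$; this commutes with the $\Z/p_j$-action, is isotopic to the identity via $(z_1, z_2) \mapsto (z_1, e^{i\pi s}z_2)$, and has a fixed circle with tangent action rotation by $\pi$. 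At each fixed point, remove a small invariant $3$-ball and glue the resulting boundary $2$-spheres by a diffeomorphism conjugating the two boundary rotations (any two rotations by $\pi$ on $S^2$ are conjugate in $\SO(3)$); the involutions then glue to an orientation-preserving involution $\Phi$ on $M$.

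Finally, to confirm $[\Phi] = \tau_I$: since each summand involution acts trivially on the $\pi_1$ of its summand and $\pi_1(M)$ is the free product of the summand fundamental groups, $\Phi$ acts trivially on $\pi_1(M)$; by the identification of the kernel of $\Mod(M) \to \Out(\pi_1 M)$ with $\Twist(M)$ for reducible $3$-manifolds, $[\Phi] \in \Twist(M) = (\Z/2\Z)^k$. Writing $[\Phi] = \prod_i \tau_i^{a_i}$, the coefficient $a_i$ is read off from the local isotopy class of $\Phi$ on a regular neighborhood of $S_i$ relative to boundary, which by construction is the local model of $\phi$ for $i \in I$ and of $\psi$ for $i \notin I$; these contribute $1$ and $0$ respectively, giving $[\Phi] = \tau_I$. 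The main obstacle will be making this last parity bookkeeping precise: one must verify that the ``trivial'' summand involutions $\psi$ and $\sigma_j$ do not secretly contribute a sphere twist at the gluing sphere, which amounts to arranging their isotopies to the identity to be compatible with the rotation-by-$\pi$ local boundary structure introduced by the equivariant connect-sum construction.
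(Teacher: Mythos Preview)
Your construction is essentially identical to the paper's: your $\phi$, $\psi$, and lens-space involution are exactly the paper's $R_1$, $R_0$, and $R_{p,q}$, and the equivariant connected sum along fixed points is the same gluing mechanism (the paper's Lemma~\ref{lem:gluing}). So at the level of building the involution, the two proofs coincide.

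The one place where the approaches diverge is the verification that $[\Phi]=\tau_I$. You propose to first land $[\Phi]$ in $\Twist(M)$ via the identification $\ker\big(\Mod(M)\to\Out(\pi_1 M)\big)=\Twist(M)$ and then read off the coefficients $a_i$ ``locally'' near each belt sphere, while honestly flagging that the trivial summand involutions $\psi$ and the lens-space involutions might contribute hidden twists at the gluing spheres. The paper sidesteps both issues by writing down explicit isotopies (its Remark~\ref{rmk:isotopy}): each of $R_0$, $R_1'$, and $R_{p,q}$ is isotoped through rotations by angle $\pi(1-t)$ near the gluing locus, so the local pictures match and glue to a global isotopy carrying $\gamma$ to an honest product of sphere twists supported near the belt spheres with $a_i=1$. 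This is precisely the ``compatible with the rotation-by-$\pi$ boundary structure'' condition you identified as the obstacle, and it resolves your bookkeeping problem directly without invoking the kernel identification (which is true but not stated in the paper for general $M$). Your plan is sound; the missing step is exactly that compatible-isotopy construction.
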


Fix $M$ as in Theorem \ref{thm:realization}, and write the prime decomposition
\[M=\#_k(S^1\times S^2)\# P_1\#\cdots\# P_\ell,\]
where each $P_i$ is a lens space different from $L(0,1)\cong S^1\times S^2$. 

To prove Theorem \ref{thm:realization}, given a nontrivial element $g\in\Twist(M)$ we define $\gamma\in\Diff(M)$ such that $\gamma^2=id$ and $[\gamma]=g$ in $\Mod(M)$. The basic approach is to define an order-2 diffeomorphism of 
\begin{equation}\label{eqn:disjoint-union}\sqcup_k(S^1\times S^2)\sqcup P_1\sqcup\cdots\sqcup P_\ell\end{equation} in such a way that the diffeomorphisms on the components can be glued to give an order-2 diffeomorphism of $M$. On each component of (\ref{eqn:disjoint-union}) we perform one of the following diffeomorphisms. 
\begin{itemize}
\item (constant $\pi$ rotation) Define 
\[R_0:S^1\times S^2\to S^1\times S^2\] by $id\times r$, where $r:S^2\to S^2$ is any $\pi$ rotation (choose one -- the particular axis is not important). 
\item (nonconstant $\pi$ rotation) Let $c:[0,1]\to\R P^2$ be a closed path that generates $\pi_1(\R P^2)$, and let $\alpha:\R P^2\rightarrow\SO(3)$ be the map that sends $\ell\in\R P^2$ to the $\pi$-rotation whose axis is $\ell$. Now define 
\[R_1:S^1\times S^2\to S^1\times S^2\] by $(t,x)\mapsto(t,\alpha(c(t))(x))$. 

Since $\alpha\circ c:[0,1]\rightarrow\SO(3)$ defines a nontrivial element of $\pi_1(\SO(3))$, the diffeomorphism $R_1$ represents the generator of $\Twist(S^1\times S^2)\cong\Z/2\Z$. This shows that $\Twist(S^1\times S^2)$ is realized. This involution appears in \cite[\S1]{tollefson_involutions} in a slightly different form.
\item (lens space rotation) Fix $p,q$ relatively prime and with $p\ge2$. View $L(p,q)$ as the quotient of $S^3\subset\mathbb C^2$ by the $\Z/p\Z$ action generated by $(z,w)\mapsto(e^{2\pi i/p}z, e^{2\pi iq/p}w)$. Define
\[R_{p,q}:L(p,q)\to L(p,q)\]
as the involution induced by $(z,w)\mapsto(z,-w)$ on $S^3$ (which descends to $L(p,q)$ since it commutes with the $\Z/p\Z$ action).
\end{itemize}

Each of the diffeomorphisms $R_0$, $R_1$, and $R_{p,q}$ has 1-dimensional fixed set. The representation in the normal direction at a fixed point is the antipodal map on $\R^2$ (there is no other option since these diffeomorphisms are involutions). Lemma \ref{lem:gluing} below allows us to glue these actions along their fixed sets. 

\begin{lemma}\label{lem:gluing}
Suppose $M,M'$ are oriented manifolds, each with a smooth action of a finite group $G$. Assume that $x\in M$ and $x'\in M'$ are fixed points of $G$, and that the representations $T_xM$ and $T_{x'}M'$ are isomorphic by an orientation reversing map. Then $M$ and $M'$ can be glued along regular neighborhoods $B$ and $B'$ of $x$ and $x'$ so that there is a smooth action of $G$ on $M\#M'$ that restricts to the given action on $M\setminus B$ and $M'\setminus B'$. \qed
\end{lemma}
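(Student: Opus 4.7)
The plan is to reduce the problem near the fixed points to a linear model via the equivariant tubular neighborhood theorem, and then perform a smooth connect sum using an inversion-based gluing that simultaneously preserves the $G$-action and the orientation.

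Equip $M$ and $M'$ with $G$-invariant Riemannian metrics, obtained by averaging arbitrary metrics over $G$. The exponential maps at $x$ and $x'$ give $G$-equivariant diffeomorphisms from open balls of radius (say) $2$ in $T_xM$ and $T_{x'}M'$ onto open neighborhoods $U$ of $x$ and $U'$ of $x'$, with $G$ acting orthogonally on the tangent spaces; by averaging an inner product, the hypothesized orientation-reversing $G$-equivariant isomorphism $\phi:T_xM\to T_{x'}M'$ may also be taken orthogonal. Now on the annulus $A=\{v\in T_xM:1/2<|v|<2\}$, define $\Phi(v)=\phi(v/|v|^2)$, which is a diffeomorphism onto $A'=\{v'\in T_{x'}M':1/2<|v'|<2\}$. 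Because the inversion $v\mapsto v/|v|^2$ commutes with the orthogonal $G$-action, $\Phi$ is $G$-equivariant. Letting $B,B'$ be the exponential images of the closed balls of radius $1/2$, set
\[M\#M':=\bigl((M\setminus B)\sqcup(M'\setminus B')\bigr)\big/{\sim},\]
where $\sim$ identifies the annular region about $x$ with the annular region about $x'$ via $\Phi$. The $G$-actions on the two pieces agree on the identified region by equivariance of $\Phi$, producing a smooth $G$-action on $M\#M'$ that restricts to the original actions on $M\setminus B$ and $M'\setminus B'$.

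For the orientation, inversion in $\mathbb{R}^n$ is orientation-reversing and $\phi$ is orientation-reversing by hypothesis, so $\Phi$ is orientation-preserving; the gluing is thus compatible with the inherited orientations, making the result the oriented connect sum $M\#M'$. The main technical point is arranging smoothness, equivariance, and orientation-compatibility of the gluing all at once, but the inversion formula $\Phi$ handles this simultaneously and is really the heart of the construction; beyond it, the argument reduces to standard equivariant differential topology.
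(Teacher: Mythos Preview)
Your argument is correct and is precisely the standard equivariant connected-sum construction. Note that the paper does not give a proof of this lemma at all: it is stated with a terminal \qed and treated as folklore, so there is no ``paper's own proof'' to compare against. You have simply supplied the details the authors omitted.

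Two minor remarks on exposition. First, the phrase ``by averaging an inner product, the hypothesized $\phi$ may also be taken orthogonal'' is slightly misleading: averaging gives you the $G$-invariant metrics, but to make $\phi$ an isometry you should invoke polar decomposition---write $\phi=UP$ with $U$ orthogonal and $P$ positive self-adjoint, observe that uniqueness forces both factors to be $G$-equivariant, and replace $\phi$ by $U$ (which remains orientation-reversing since $\det P>0$). Second, you implicitly assume the exponential map is injective on the ball of radius $2$; a sentence noting that this can be arranged by rescaling the metric would make the argument airtight. Neither point affects the validity of the proof.
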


\begin{remark}
The condition that the isomorphism $T_xM\cong T_{x'}M'$ be orientation-reversing appears because the connected sum of two oriented manifolds is defined by deleting an open ball from each and identifying the boundaries of these balls by an orientation-reversing diffeomorphism. This condition is always satisfied if each tangent space contains a copy of the trivial representation (choose an appropriate reflection). 
\end{remark}

\begin{remark}[Useful isotopies]\label{rmk:isotopy}
To prove that $\gamma\in\Diff(M)$ is in the isotopy class of $g\in\Twist(M)$, the following observation will be useful. The fixed set of $R_{p,q}$ acting on $L(p,q)$ contains\footnote{It's possible that the fixed set is larger (this is true for $L(2,1)\cong\R P^3$), but this is not important.} the image $C$ of the circle $\{(z,0): |z|=1\}\subset S^3$. The isotopy $h_t(z,w)=(z,e^{\pi i(1-t)}w)$, $0\le t\le 1$, descends to $L(p,q)$ to give an isotopy between $R_{p,q}$ and the identity, and $h_t$ fixes $C$ for each $t$. 

Similarly, it's possible to isotope $R_1$ to $R_1'$, which is a constant $\pi$-rotation (say about the $z$-axis) on a neighborhood of $*\times S^2$, for some fixed basepoint $*\in S^1$ (observe that $R_1'$ is still an involution). Furthermore, we can isotope $R_1'$ to a diffeomorphism that is the identity near $*\times S^2$ and in such a way that the isotopy at time $t\in[0,1]$ is a rotation by angle $\pi (1-t)$ (about the $z$-axis) on each sphere in a regular neighborhood of $*\times S^2$. The fixed set restricted to a neighborhood of $*\times S^2$ remains constant during this isotopy. 

Finally, we can isotope $R_0$ to the identity so that at time $t$ the diffeomorphism is a constant rotation by angle $\pi(1-t)$ (about the fixed axis). 

On a neighborhood of a fixed point, the local picture of the isotopies of $R_{p,q}$, $R_1'$, and $R_0$ looks the same. This will allow us to perform these isotopies equivariantly on connected sums. 
\end{remark}

We proceed now to the proof of Theorem \ref{thm:realization}. First we warm up with the case $M=\#_k(S^1\times S^2)$ and then we do the general case. 

\subsection{Realizations for connected sums of \boldmath $S^1\times S^2$}\label{sec:torus-case}

Fix $k\ge1$ and consider 
\[M_k:=\#_k(S^1\times S^2).\] Let $S_i$ be a belt sphere in the $i$-th connect summand, and denote the sphere twist about $S_i$ by $\tau_i$. The twists $\tau_1,\ldots,\tau_k$ form a basis for $\Twist(M_k)\cong(\Z/2\Z)^k$, c.f.\ Theorem \ref{thm:twist-group}. 

Fix a nonzero element
\[g=a_1\tau_1+\cdots+a_k\tau_k\] in $\Twist(M_k)$. We start by defining an involution $\hat\gamma$ of $\sqcup_k(S^1\times S^2)$. For ease of exposition, let $W_i=S^1\times S^2$ denote the $i$-th component of $\sqcup_k(S^1\times S^2)$. Define $\hat\gamma$ on $W_i$ to be $R_0$ or $R_1$, depending on whether the coefficient $a_i$ is $0$ or $1$, respectively. Next we glue using Lemma \ref{lem:gluing} to obtain an involution $\gamma$ of $M_k=W_1\#\cdots\#W_k\cong\#_k(S^1\times S^2)$. There are multiple ways to describe the gluing; for example, choose $k-1$ distinct fixed points $x_1,\ldots,x_{k-1}\in W_k$, and for $1\le i\le k-1$, glue $W_i$ to $W_k$ along regular (equivariant) neighborhoods of $x_i$ and an arbitrary fixed point $y_i\in W_i$ (the neighborhoods of $x_1,\ldots,x_k$ should be chosen to be small enough so that they are disjoint).

To see that $\gamma\in\Diff(M_k)$ is in the isotopy class $g$, recall the short exact sequence of Laudenbach
\[1\to\Twist(M_k)\to\Mod(M_k)\to\Out(\pi_1(M_k))\to1.\]
It's easy to check that $\gamma$ acts trivially on $\pi_1(M_k)$, so $\gamma$ represents a mapping class in $\Twist(M_k)$. The particular isotopy class is determined by the action on trivializations of the tangent bundle of $M_k$, and in this way one can check that $[\gamma]=g$ in $\Twist(M_k)$. We do not spell out the details of this because we give an alternate argument in the next section in the general case. 

\begin{remark}
We cannot realize a non-cyclic subgroup of $\Twist(M_n)$ using this construction because it is not possible to choose the axis for $R_0$ so that (1) $R_0$ and $R_1$ have a common fixed point and (2) $R_0$ and $R_1$ commute. Indeed,  \S\ref{sec:obstruction} proves no non-cyclic subgroup of $\Twist(M_n)$ is realized. 
\end{remark}

\subsection{Realizations for connected sum of lens spaces} 

Now we treat the general case 
\[M=\#_k(S^1\times S^2)\# L(p_1,q_1)\#\cdots\# L(p_\ell,q_\ell),\] where each $L(p_j,q_j)$ is a lens space different from $L(0,1)\cong S^1\times S^2$. Our approach is similar to the preceding section. 

Recall from Corollary \ref{cor:twist-generator} that $\Twist(M)\cong(\Z/2\Z)^k$ is generated by twists $\tau_1,\ldots,\tau_k$ in the belt spheres of the $S^1\times S^2$ summands. 

Fix a nonzero element
\[g=a_1\tau_1+\cdots+a_k\tau_k\] in $\Twist(M)$. We start by defining an involution $\hat\gamma$ of 
\[\sqcup_k(S^1\times S^2)\sqcup L(p_1,q_1)\sqcup\cdots\sqcup L(p_\ell,q_\ell).\]
Let $W_i$ denote the $i$-th component diffeomorphic to $S^1\times S^2$. Define $\hat\gamma$ on $L(p_j,q_j)$ to be $R_{p_j,q_j}$, and on $W_i$ to be $R_0$ or $R_1'$, depending on whether the coefficient $a_i$ is $0$ or $1$, respectively. (Recall that $R_1'$ is similar to $R_1$, but it has a product region.) 

Next we glue using Lemma \ref{lem:gluing} to obtain an involution $\gamma$ of $M$. We glue by the following pattern. First we glue $W_1,\ldots,W_k$. Choose $k-1$ distinct fixed points $x_1,\ldots,x_{k-1}\in W_k$, and for $1\le i\le k-1$, glue $W_i$ to $W_k$ along regular (equivariant) neighborhoods of $x_i$ and an arbitrary fixed point $y_i\in W_i$ (as was done in the preceding section). Next glue $L(p_j,q_j)$ to  $W_k$ in a region where $\hat\gamma$ acts as a product (we can choose $x_1,\ldots,x_{k-1}$ and the regular neighborhoods of these points to ensure that there is room to do this). In this way we obtain an involution $\gamma\in\Diff(M)$. 

We need to check that $\gamma$ is in the isotopy class of $g$. Using the isotopies defined in Remark \ref{rmk:isotopy}, we can isotope $\hat \gamma$ to a map that is the identity on each $L(p_j,q_j)$ component and each component $W_i$ such that $a_i=0$, and is a sphere twist on each component $W_i$ such that $a_i=1$. By construction these isotopies glue to give an isotopy of $\gamma$ to a product of sphere twists representing $g$.

This completes the proof of Theorem \ref{thm:realization}.\qed

\begin{question}
Are any two realizations of $g\in\Twist(M)$ conjugate in $\Diff(M)$? 
\end{question}

\section{Burnside Problem for $\Diff(M)$ for reducible 3-manifold $M$}\label{Burn}
In this section, we prove Theorem \ref{Burnside}, which follows quickly from Lemma \ref{Burnside_lemma}. 
\begin{lemma}\label{Burnside_lemma}
Fix a closed, oriented $3$-manifold $M$, and consider the group 
\[K:=\ker\big[\Diff(M)\xrightarrow{\Phi} \Out(\pi_1(M))\big].\] If $M$ is reducible and that $M$ is not a connected sum of lens spaces, then $K$ is torsion free. 
\end{lemma}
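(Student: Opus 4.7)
The plan is to reduce the lemma to Theorem \ref{thm:contraint} via a case split on orientation. Suppose $f\in K$ has finite order $n\ge 2$. If $f$ preserves orientation, then $\langle f\rangle<\Diff^+(M)$ is a nontrivial finite cyclic subgroup acting trivially on $\pi_1(M)$ (because $f\in K$). By Theorem \ref{thm:contraint}, $M$ would have to be a connected sum of lens spaces, contradicting the hypothesis. So $f$ must reverse orientation; then $n$ is even, $f^2\in K$ is orientation-preserving of order $n/2$, and applying the previous case to $f^2$ gives $f^2=1$. Hence the only possibility is that $f$ is an orientation-reversing involution.

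To rule out such an involution, I would follow the blueprint used to prove Theorem \ref{thm:contraint}. The equivariant sphere theorem (Theorem \ref{thm:invariant-spheres}(1)) supplies a $\langle f\rangle$-invariant sphere system $\mathbb S$. Since $f\in K$ acts trivially on $\pi_1(M)$, Laudenbach's theorem (\cite[Thm.\ 2.4]{BBP}) promotes this to a trivial action on $\pi_2(M)$ after an isotopy fixing a basepoint; combined with the fact that $M\ne S^1\times S^2$ (since $S^1\times S^2\cong L(0,1)$ is itself a lens space and $M$ is not a connected sum of lens spaces), Theorem \ref{thm:invariant-spheres}(2) implies that $f$ preserves each sphere in $\mathbb S$. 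By Proposition \ref{prop:fundamental-group}, $f$ then acts trivially on $\pi_1(N,p)$ for every component $N$ of $M_{\mathbb S}$ at a canonical fixed point $p$. An orientation-reversing analogue of Theorem \ref{thm:holder-argument} applied to each such $N$ forces each to be a lens space, making $M$ a connected sum of lens spaces, which is the desired contradiction.

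The main obstacle is the last step: extending Theorem \ref{thm:holder-argument} to allow orientation-reversing $f|_N$. Its proof applies Smith theory to a lift $F:\widetilde N\to\widetilde N$, using that for an orientation-preserving finite-order smooth diffeomorphism of a 3-manifold the fixed set is 1-dimensional. For an orientation-reversing involution the fixed components are 0- or 2-dimensional instead, but $\mathbb F_2$-Smith theory still shows that $(\widetilde N)^F$ is connected and $\mathbb F_2$-acyclic. I would then carry out a case analysis on the free $\pi_1(N)$-action on this fixed set. If $\pi_1(N)$ is infinite then $\widetilde N$ is contractible, and the fixed set must be either a point or a plane $\mathbb R^2$; the former is immediately contradictory, while the latter gives $\pi_1(N)$ a cocompact free action on $\mathbb R^2$ (compact fundamental domain on $\widetilde N$ intersected with the fixed set), realizing $\pi_1(N)$ as a closed surface group and contradicting that $\pi_1(N)$ is a 3-dimensional Poincar\'e duality group. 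If $\pi_1(N)$ is finite then $\widetilde N\cong S^3$, and the $\mathbb F_2$-acyclic fixed set is $S^0$ or $S^2$, each supporting a free action only by groups of order $\le 2$, so $N$ is a lens space.
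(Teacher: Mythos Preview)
Your orientation-preserving case is exactly the paper's proof. The paper fixes a nontrivial $G=\Z/p\Z<K$, produces a sphere system via Theorem~\ref{thm:invariant-spheres}, and applies Proposition~\ref{prop:fundamental-group} and Theorem~\ref{thm:holder-argument} to force every component of $M_{\mathbb S}$ to be a lens space; this is precisely the content of Theorem~\ref{thm:contraint}, which you invoke directly.

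You go further than the paper by treating orientation-reversing torsion explicitly; the paper's own proof does not address this case (all of the ingredients it cites---Theorem~\ref{thm:invariant-spheres}, Proposition~\ref{prop:fundamental-group}, Theorem~\ref{thm:holder-argument}---are stated only for subgroups of $\Diff^+(M)$). Your reduction to an orientation-reversing involution is correct, but the subsequent argument has a genuine gap. The Laudenbach step fails as written: an orientation-reversing diffeomorphism can act trivially on $\pi_1(M,*)$ while acting by $-1$ on $\pi_2(M,*)$---on $S^1\times S^2$ take $(z,p)\mapsto(z,r(p))$ with $r$ a reflection of $S^2$---so you cannot simply quote \cite[Thm.~2.4]{BBP}, and ``after an isotopy'' is no help since you must keep $f$ of finite order. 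One can still rescue Theorem~\ref{thm:invariant-spheres}(2) by observing that $[f(S)]=\pm[S]$ in $\pi_2$ already makes $S$ and $f(S)$ isotopic as \emph{unoriented} embedded spheres, so Lemma~\ref{lem:annulus} applies; but this needs to be argued, not cited. Your orientation-reversing analogue of Theorem~\ref{thm:holder-argument} is essentially right in outline, with two small corrections: cocompactness of the $\pi_1(N)$-action on the fixed plane follows because the fixed set descends to a closed subset of the compact manifold $N$ (not from ``intersecting a fundamental domain''), and identifying a $\Z/2\Z$-acyclic open surface with $\R^2$ uses the classification of noncompact surfaces rather than acyclicity alone.
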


\begin{remark}[A strong converse to Lemma \ref{Burnside_lemma}]
If $M$ is a connected sum of lens spaces, then $M$ has a faithful $S^1$-action, so $K$ contains $S^1$ as a subgroup. To see this, observe that each lens space has an $S^1$ action with global fixed points, so by performing the connected sum equivariantly along fixed points (similar to the construction in Section \ref{sec:construction}) we obtain an $S^1$ action on $M$.
\end{remark}

%We first show that Lemma \ref{Burnside_lemma} implies Theorem \ref{Burnside}, and then we prove Lemma \ref{Burnside_lemma}. 

\begin{proof}[Proof of Theorem \ref{Burnside}]
If $\Diff(M)$ has an infinite torsion group $G$, then either the image of $\Phi$ is an infinite torsion group or the kernel $K$ of $\Phi$ has infinite torsion group. By \cite[Thm.\ 5.2]{Hong_McCullough}, $\Out(\pi_1(M))$ contains a torsion-free finite-index subgroup. This implies that $\Phi(G)$ is finite, since every element of $\Phi(G)$ has finite order. On the other hand, by Lemma  \ref{Burnside_lemma} that  $K$ is torsion free, so $G\cong \Phi(G)$ is finite.
\end{proof}

\begin{proof}[Proof of Lemma \ref{Burnside_lemma}]
Fix a nontrivial subgroup $G=\Z/p\Z<K$, and fix a sphere system $\mathbb S$ for $G$ (Theorem \ref{thm:invariant-spheres}). By Proposition \ref{prop:fundamental-group}, the action of $G$ on each component $N$ of $M_\mathbb{S}$ is trivial on $\pi_1(N,p)$ as an automorphism. This implies that each component $N$ of $M_\mathbb{S}$ is a lens space by Theorem \ref{thm:holder-argument}.
\end{proof}

\bibliographystyle{alpha}
\bibliography{citing}

\end{document}